\newtheorem{lemma}{Lemma}[section]
\newtheorem{theorem}[lemma]{Theorem}
\newtheorem{defi}[lemma]{Definition}
\newtheorem{corollary}[lemma]{Corollary}
\newtheorem{prop}[lemma]{Proposition }
\theoremstyle{plain}
\title{ Maximally connected and super arc-connected Bi-Cayley digraphs
\footnote{The research is supported by NSFC (No.10671165) and SRFDP
(No.20050755001).}}
\author{  Yuhu Liu
\footnote{Corresponding author. E-mail:
lyh1120110001@mail.nankai.edu.cn (Y.H.Liu),
mjx@xju.edu.cn  (J.Meng).} \quad  Jixiang Meng \\
{\small\bf College of Mathematics and System Sciences, Xinjiang
University,}\\ {\small\bf Urumqi, Xinjiang, 830046, People¡¯s
Republic of China.}}
\date{}
\begin{document}
\maketitle

\noindent{\bf Abstract} %\baselineskip 0.8cm
Let $X=(V, E)$ be a digraph. $X$ is \emph{maximally connected}, if
$\kappa(X)=\delta(X)$.  $X$ is \emph{maximally arc-connected}, if
$\lambda(X)=\delta(X)$. And $X$ is \emph{super arc-connected}, if
every minimum  arc-cut of $X$ is either the set of inarcs of some
vertex or the set of outarcs of some vertex. In this paper, we will
prove that the strongly connected \emph{Bi-Cayley digraphs} are
maximally connected and maximally arc-connected, and the most of
strongly connected Bi-Cayley digraphs are super arc-connected.

\noindent{\bf Keywords:}  Bi-Cayley digraph; atom; $\lambda-$atom;
$\lambda-$superatom

\section {\large Introduction}
A \emph{digraph} is a pair $X=(V, E)$, where $V$ is a finite set and
$E$ is an irreflexive relation on $V$. Thus $E$ is a set of ordered
pairs $(u,v)\in V\times V$ such that $u\neq v$. The elements of $V$
are called the {\em vertices} or \emph{nodes} of $X$ and the
elements of $E$ are called the \emph{arcs} of $X$. Arc $(u,v)$ is
said to be an \emph{inarc} of $v$ and an \emph{outarc} of $u$; we
also say that $(u, v)$ originates at $u$ and terminates at $v$. If
$u$ is a vertex of $X$, then the \emph{outdegree} of $u$ in $X$ is
the number \emph{d}$_{X}^{+}$($u$) of arcs of $X$ originating at $u$
and the \emph{indegree} of $u$  in $X$ is the number
\emph{d}$_{X}^{-}$($u$) of arcs of $X$ terminating at $u$. The
minimum outdegree of $X$ is
$\delta^{+}$(X)=min{\{\emph{d}$_{X}^{+}$($u$) $|$ $u\in V$\}} and
the minimum indegree of $X$ is
$\delta^{-}$($X$)=min{\{\emph{d}$_{X}^{-}$($u$) $|$ $u\in V$\}}. We
denote by $\delta(X)$ the minimum of $\delta^{+}$($X$) and
$\delta^{-}$($X$).

The \emph{reverse} digraph of digraph $X=(V, E)$ is the digraph
$X^{(r)}=(V,\{(v,u)$ $|$ $(u,v)\in E\})$. Digraph $X=(V,E)$ is
\emph{symmetric} if $E$=$E^{(r)}$ and is \emph{antisymmetric} if
$E\bigcap E^{(r)}$=$\varnothing$. An (undirected) graph is a pair
$X=(V, E)$ where $V$ is a finite set and $E$ is a collection of
two-element subsets of $V$. We will in general identify an
undirected graph $X=(V, E)$ with the symmetric digraph $X_s=(V,
E_s)$ where $E_s$=$\{(u, v)| \{u, v\}\in E\} \cup \{(v, u)| \{u,
v\}\in E\}$. A digraph with exactly one vertex (and therefore no
arcs) is called a \emph{trivial} digraph. We denote by $K_n^*$ the
digraph with vertices the integers from 1 to $n$ and arcs all pairs
$(i, j)$ of such integers with $i \neq j$. A digraph isomorphic to
$K_n^*$ is said to be a \emph{complete symmetric digraph}.

For a digraph $X=(V,E)$ and a subset $A$ of $V$, we can get a
subdigraph $X[A]$ of $X$ whose vertex set is $A$ and whose arc-set
consists of all arcs of $X$ which have both ends in $A$. And we call
the subdigraph $X[A]$ is an induced subdigraph of $X$.

\begin{defi} Let $G$ be a group and $T_{0}$, $T_{1}\subseteq
 G$. We define the Bi-Cayley digraph $X= BD(G, T_{0}, T_{1})$ to
be the bipartite digraph with vertex set $G\times\{0, 1\}$ and arc
set \{(($g$, 0), ($t_{0}\cdot$$g$, 1)), (($t_{1}\cdot$$g$, 1), ($g$,
0)) $|$ $g$$\in$$G$, $t_{0}\in$$T_{0}$, $t_{1}\in$$T_{1}$ \} .
\end{defi}
By definition we observe that  $d_{X}^{+}$((g,0))=$|T_{0}|$,
$d_{X}^{-}((g,0))=|T_{1}|$, $d_{X}^{+}((g,1))$\\$=|T_{1}|$,
$d_{X}^{-}((g,1))=|T_{0}|$, for any $g\in G$.

In this paper, we always denote $X_0=G\times \{0\}$ and
$X_1=G\times\{1\}$. Some new results on the Bi-Cayley graph are
referred to \cite{Gao,Kov¨¢cs, Lu, Luo}. In this paper, we will
consider Bi-Cayley digraphs. Denote $R(G)=\{R(a)|R(a):
(g,i)\rightarrow(ga,i),$ for $a, g\in G$ and i=0, 1$\}$. We will get
the following proposition.

\begin{prop} Let $X=BD(G, T_{0}, T_{1})$, then

$(1)$ $R(G)\leqslant$Aut(X), furthermore Aut(X) acts transitively
both on $X_{0}$ and $X_{1}$.

$(2)$ X is strongly connected if and only if $|T_0|\geq 1$,
$|T_1|\geq1$ and $G=<T_1^{-1}T_0>$  .
\end{prop}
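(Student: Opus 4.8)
The plan is to treat the two parts separately, using only the explicit arc description of $X=BD(G,T_0,T_1)$ together with a couple of elementary facts about finite groups.

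For $(1)$, I would first check that each right translation $R(a)\colon(g,i)\mapsto(ga,i)$ belongs to $\operatorname{Aut}(X)$. It is a permutation of $V=G\times\{0,1\}$ with inverse $R(a^{-1})$, so it suffices to show that it maps arcs to arcs: the arc $((g,0),(t_0g,1))$ is sent to $((ga,0),(t_0ga,1))$, again an arc of the first type with base point $(ga,0)$, and $((t_1g,1),(g,0))$ is sent to $((t_1ga,1),(ga,0))$, again an arc of the second type. Since $R(a)$ is bijective and $E$ is finite, $R(a)(E)=E$, so $R(a)\in\operatorname{Aut}(X)$ and hence $R(G)\leqslant\operatorname{Aut}(X)$. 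Transitivity is then immediate: for $(g,i)$ and $(h,i)$ in the same part, $R(g^{-1}h)$ sends one to the other, so $R(G)$ — and a fortiori $\operatorname{Aut}(X)$ — acts transitively on $X_0$ and on $X_1$.

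For $(2)$, the crucial computation is the effect of a directed walk of length two. From the arc set, the out-neighbours of $(g,0)$ are the vertices $(t_0g,1)$ with $t_0\in T_0$, and the out-neighbours of $(h,1)$ are the vertices $(t_1^{-1}h,0)$ with $t_1\in T_1$; hence a length-two walk carries $(g,0)$ to $(t_1^{-1}t_0\,g,0)$. Iterating, when $T_0$ and $T_1$ are nonempty the set of vertices of $X_0$ reachable from $(e,0)$ is exactly $\langle T_1^{-1}T_0\rangle\times\{0\}$, using that over a finite group the subsemigroup generated by a nonempty set $S$ (being a nonempty multiplicatively closed subset) is already the subgroup $\langle S\rangle$. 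For the ``only if'' direction: if $X$ is strongly connected then, as $|V|=2|G|\geq2$, every vertex has positive out- and in-degree, forcing $|T_0|\geq1$ and $|T_1|\geq1$; and if $\langle T_1^{-1}T_0\rangle$ were a proper subgroup of $G$, the vertices reachable from $(e,0)$ would not exhaust $X_0$, contradicting strong connectivity. Hence $G=\langle T_1^{-1}T_0\rangle$.

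For the ``if'' direction, assume $|T_0|,|T_1|\geq1$ and $G=\langle T_1^{-1}T_0\rangle$. As $G$ is finite, every $w\in G$ is a product of elements of $T_1^{-1}T_0$, so by the length-two walk computation $(e,0)$ reaches every vertex of $X_0$; appending one more arc (possible since $T_0\neq\varnothing$) and letting the first coordinate range over $G$, $(e,0)$ also reaches every vertex of $X_1$. Conversely, for $(g,0)\in X_0$ the automorphism $R(g)$ maps $(e,0)$ to $(g,0)$ and $X_0$ onto $X_0$, so $(g,0)$ reaches all of $X_0$, in particular $(e,0)$; and any $(h,1)\in X_1$ reaches a vertex of $X_0$ in one step, hence reaches $(e,0)$. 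Concatenating a walk from an arbitrary vertex $x$ to $(e,0)$ with a walk from $(e,0)$ to an arbitrary vertex $y$ gives a directed walk from $x$ to $y$, so $X$ is strongly connected. I expect the only delicate points to be keeping the left/right-multiplication bookkeeping along walks consistent, and using that over a finite group the subsemigroup generated by $T_1^{-1}T_0$ coincides with the subgroup $\langle T_1^{-1}T_0\rangle$ it generates.
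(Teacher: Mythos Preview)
Your proof is correct and follows essentially the same route as the paper's: for (1) you both verify directly that right translations preserve the arc set and then exhibit $R(g^{-1}h)$ for transitivity, and for (2) you both analyze length-two directed walks $(g,0)\to(t_1^{-1}t_0 g,0)$ to identify reachability inside $X_0$ with membership in $\langle T_1^{-1}T_0\rangle$. The only cosmetic difference is in the sufficiency direction of (2): the paper writes an arbitrary element as a word in $T_1^{-1}T_0\cup(T_1^{-1}T_0)^{-1}$ and asserts a path, while you make the finiteness argument explicit (subsemigroup $=$ subgroup) and recycle part~(1) to pass from reachability out of $(e,0)$ to reachability from any vertex; your version is a touch more careful but not substantively different.
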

\begin{proof} $(1)$ For any $R(a)\in R(G)$ and $((g_{1},0),(g_{2},1))\in
E(X)$, there exists some $t_{0}\in T_{0}$ such that
$g_{2}=t_{0}g_{1}$, then $g_{2}a=t_{0}g_{1}a$. Thus
$((g_{1},0),(g_{2},1))^{R(a)}\\=((g_{1}a, 0), (g_{2}a, 1))\in E(X)$.
Similarly, if $((g_{2},1),(g_{1},0))\in E(X)$, then
$((g_{2},1),(g_{1},0))^{R(a)}\in E(X)$.  So R(a) is an automorphism
of the Bi-Cayley digraph X, thus $R(G)\leq \emph{Aut(X)}$. Since
$(g_{1}, i)^{R(g_{1}^{-1}g_{2})}=(g_{2},i)$ for any $g_{1},g_{2}\in
G$, \emph{Aut(X)} acts transitively both on $X_{0}$ and $X_{1}$.\\
(2) If $X$ is strongly connected, then $|T_0|\geq 1$, $|T_1|\geq1$
and there exists a directed path from $(1_G,0)$ to $(g,0)$ for any
$g\in G$. Thus there exists an integer $n$, $t_0^{(i)}\in T_0$ and
$t_1^{(i)}\in T_1(1\leq i\leq n)$ such that $1_G\rightarrow
t_0^{(1)}\rightarrow
(t_1^{(1)})^{-1}t_0^{(1)}\rightarrow\cdot\cdot\cdot\rightarrow
(t_1^{(n)})^{-1}t_0^{(n)}
\cdot\cdot\cdot(t_1^{(2)})^{-1}t_0^{(2)}(t_1^{(1)})^{-1}t_0^{(1)}=
g$, that is $G=<T_1^{-1}T_0>$. On the other hand,  for any $h$,
$g\in G$, $h^{-1}g$ is in $ G=<T_1^{-1}T_0>$ if and only if it can
be written as a product of elements of $T_1^{-1}T_0 \cup
(T_1^{-1}T_0)^{-1}$. Thus we can easily know there exists a path
from $(g,i)$ to $(h,i)$. And since $|T_0|\geq1$ and $|T_1|\geq1$,
$(g, 0)$ has both outarcs and inarcs for any $g\in G$. So $X$ is
strongly connected.
\end{proof}

\section{Connectivity }

Let $X=(V,E)$ be a strongly connected digraph. An \emph{arc
disconnecting set} of $X$ is a subset $W$ of $E$ such that
$X$$\setminus$W=(V, E$\setminus$W) is not strongly connected. An arc
disconnecting set is \emph{minimal} if no proper subset of $W$ is an
arc disconnecting set of $X$ and is a \emph{minimum arc
disconnecting set} if no other arc disconnecting set has smaller
cardinality than $W$. The \emph{arc connectivity $\lambda$(X)} of a
nontrivial digraph $X$ is the cardinality of a minimum arc
disconnecting set of $X$.

The \emph{positive arc neighborhood} of a subset A of V is the set
$\omega_{X}^ {+}($A$)$ of all arcs which initiate at a vertex of A
and terminate at a vertex of $V$$\setminus$$A$. The \emph{negative
neighborhood} of subset $A$ of $V$ is the set $\omega_{X}^ {-}($A$)$
of all arcs which initiate in $V$$\setminus$$A$ and terminate in
$A$. Thus $\omega_{X}^ {-}($A$)$=$\omega_{X}^ {+}($V$ \setminus
$A$)$. Arc neighborhoods of proper, nonempty subsets of $V$, often
called \emph{arc-cuts}, are clearly arc disconnecting sets. Thus for
any proper, nonempty subset $A$ of $V$, $|\omega^+(A)|\geq
\lambda(X)$. If we consider the cases where $A$ consists of a single
vertex or the complement of a single vertex, we easily see that
$\lambda(X)\leq\delta(X)$.

 A nonempty subset $A$ of $V$ is called a \emph{positive(respectively,
negative) arc fragment} of $X$ if $|\omega^+(A)|=\lambda(X)$
(respectively, $|\omega^-(A)|=\lambda(X)$). An arc fragment $A$ with
$2\leq |A|\leq|V(X)|-2$ is called a \emph{strict arc fragment } of
$X$. An arc fragment of minimum cardinality is called
\emph{$\lambda$-atom} of $X$ and a strict arc fragment of least
possible cardinality is called a \emph{$\lambda$-superatom} of $X$.
Note that a $\lambda$-atom(respectively, $\lambda$-superatom) may be
either a positive arc fragment or a negative arc fragment or both. A
$\lambda-$atom which is a positive(respectively, negative) arc
fragment is called a \emph{positive(respectively, negative)
$\lambda$-atom} and a $\lambda-$superatom which is a
positive(respectively, negative) arc fragment is called a
\emph{positive(respectively, negative) $\lambda$-surperatom}.

A \emph{vertex disconnecting set} of $X$ is a subset $F$ of $V(X)$
such that $X$$\setminus$F is either trivial or is not strongly
connected. We often call $F$ a \emph{vertex-cut}. The
\emph{connectivity $\kappa$(X)} of a nontrivial digraph $X$ is the
cardinality of a minimum vertex disconnecting set of $X$.

The \emph{positive neighborhood} of a subset $F$ of $V$ is the set
$N^{+}(F)$ of all vertices of $V\setminus F$ which are targets of
arcs initiating at a vertex of $F$. The \emph{positive closure}
$C^{+}(F)$ of $F$ is the union of $F$ and $N^+(F)$. The
\emph{negative neighborhood} of subset $F$ of $V$ is the set
$N^{-}(F)$ of all vertices of $V\setminus F$ which are the initial
vertices of arcs which terminate at a vertex of $F$. The
\emph{negative closure} $C^{-}(F)$ of $F$ is the union of $F$ and
$N^-(F)$.

If $F$ is a nonempty subset of $V$ with $C^+(F)\neq V$, then the
positive neighborhood of $F$ is clearly a vertex disconnecting set
for $X$. Thus for each such set $F$, $|N^+(F)|\geq\kappa(X)$. If we
consider the cases where $F$ consists of a single vertex or the
complement of a single vertex, we easily see that
$\kappa(X)\leq\delta(X)$.

A nonempty subset $F$ of $V$ is called a
\emph{positive}(respectively, \emph{negative}) \emph{fragment} of
$X$ if $|N^+(F)|=\kappa(X)$ and $C^+(F)\neq V$ (respectively,
$|N^-(F)|=\kappa(X)$ and $C^-(F)\neq V$). A fragment  of minimum
cardinality is called \emph{atom}. Note that an atom may be either a
positive fragment or a negative fragment or both. A atom which is a
positive(respectively, negative) fragment is called a
\emph{positive(respectively, negative) atom }.

A digraph $X$ is \emph{maximally arc connected}(respectively,
\emph{maximally connected}), or more simply,
\emph{max-$\lambda$}(respectively, \emph{max-$\kappa$}), if
$\kappa(X)=\delta(X)$(respec-\\tively, $\lambda(X)=\delta(X)$). And
$X$ is \emph{super arc connected}, or more simply,
\emph{super-$\lambda$} if every minimum arc-cut of $X$ is either the
set of inarcs of some vertex or the set of outarcs of some vertex.
The relationship of $\lambda(X)$ and $\kappa(X)$ is well known:
$\kappa(X)\leq\lambda(X)\leq\delta(X)$. So if $\kappa(X)=\delta(X)$,
then $\lambda(X)=\delta(X)$. In the following of this section we
will try to prove that $\kappa(X)=\delta(X)$ for Bi-Cayley digraphs.

A desirable property one wishes any type of atom to have is that, if
nontrivial, they form \emph{imprimitive blocks} for the automorphism
group of the digraph. To be precise, an \emph{imprimitive block} for
a group $\Phi$ of permutations of a set $T$ is a proper, nontrivial
subset $A$ of $T$ such that if $\varphi\in\Phi$ then either
$\varphi(A)=A$ or $\varphi(A)\cap A=\emptyset$. In the following
proposition Hamidoune has proved that the positive(respectively,
negative) atoms of $X$ are imprimitive blocks of $X$. The following
proposition indicates why imprimitivity is so useful.
\begin{prop}\cite{HamTin} Let $X=(V,E)$ be a
graph or digraph and let $Y$ be the subgraph or subdigraph induced
by an imprimitive block $A$ of $X$. Then\\1. If $X$ is
vertex-transitive then so is $Y$;\\2. If $X$ is a strongly connected
arc-transitive digraph or a connected edge-transitive graph and $A$
is a proper subset of $V$, then $A$ is an independent subset of $X$.
\end{prop}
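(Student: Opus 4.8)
The statement to prove is a general fact about imprimitive blocks, so the proof proposal concerns Proposition 2.1 as quoted. Note this is attributed to \cite{HamTin}, so presumably the paper does not reprove it; but if I were to reconstruct the argument, here is the plan.

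For part 1, the plan is to observe that if $A$ is an imprimitive block for $\mathrm{Aut}(X)$ and $u,v\in A$, then by vertex-transitivity of $X$ there is $\varphi\in\mathrm{Aut}(X)$ with $\varphi(u)=v$; since $u\in A\cap\varphi^{-1}(A)$... more cleanly, $v=\varphi(u)\in\varphi(A)\cap A$, so $\varphi(A)\cap A\neq\emptyset$, and the block property forces $\varphi(A)=A$. Hence $\varphi$ restricts to an automorphism of the induced subdigraph $Y=X[A]$ carrying $u$ to $v$. This gives vertex-transitivity of $Y$.

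For part 2, the plan is a counting/contradiction argument. Suppose $A$ is not independent, so $X[A]$ contains an arc. First I would use part 1 (which applies since a strongly connected arc-transitive digraph is vertex-transitive) to get that $Y=X[A]$ is vertex-transitive, hence every vertex of $A$ has the same in- and out-degree $d$ within $A$, and $d\geq 1$ by the non-independence assumption. The key step is then to compare arc counts: arc-transitivity of $X$ means $\mathrm{Aut}(X)$ acts transitively on $E(X)$; pick an arc $e$ lying inside $A$ and an arc $f$ leaving $A$ (such $f$ exists since $A\subsetneq V$ and $X$ is strongly connected, so $\omega^+(A)\neq\emptyset$), and take $\psi\in\mathrm{Aut}(X)$ with $\psi(e)=f$. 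The tail of $e$ is in $A$, so the tail of $f$ is in $\psi(A)$; since that tail is also in $A$, we get $\psi(A)\cap A\neq\emptyset$, so $\psi(A)=A$. But then $\psi$ maps the arc $e$ (both endpoints in $A$) to an arc with both endpoints in $\psi(A)=A$, contradicting that $f$ has its head outside $A$. Therefore $X[A]$ has no arcs, i.e. $A$ is independent.

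The main obstacle is being careful about the direction of arcs and about which transitivity hypothesis is invoked where: part 1 only needs vertex-transitivity, while part 2 genuinely needs arc-transitivity (vertex-transitivity alone is not enough), and one must check that the induced subdigraph inherits enough structure for the degree argument. In the graph (undirected, edge-transitive) case the same argument runs with "arc" replaced by "edge" and orientation concerns removed, so the two cases are parallel. Since this proposition is cited rather than proved in the present paper, I would simply reference \cite{HamTin} and move on, using it as a black box in the subsequent sections on atoms of Bi-Cayley digraphs.
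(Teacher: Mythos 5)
The paper itself gives no proof of this proposition --- it is quoted verbatim from \cite{HamTin} and used as a black box, exactly as you suggest doing --- so there is no in-paper argument to compare against. Your reconstruction is nevertheless correct and is essentially the standard Hamidoune--Tindell/Watkins argument: part 1 is exactly the block-stabilization observation ($v=\varphi(u)\in\varphi(A)\cap A$ forces $\varphi(A)=A$), and the core of part 2 --- map an arc interior to $A$ onto an arc of $\omega^+(A)$ and derive a contradiction from the block property --- is the right mechanism. Two small remarks. First, the degree-counting preamble in your part 2 (invoking part 1 to get constant in/out-degree $d\geq 1$ on $A$) is superfluous: your contradiction only needs the existence of a single arc inside $A$ and a single arc leaving $A$, so you can delete that paragraph. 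Second, this is actually fortunate for the undirected case: a connected edge-transitive graph need \emph{not} be vertex-transitive (e.g.\ a star), so an argument that routed through vertex-transitivity of $Y$ would not transfer to the edge-transitive hypothesis; since your key step never uses it, the two cases really are parallel as you claim. One point worth making explicit: the existence of the arc $f$ leaving $A$ uses both that $A$ is proper and that $X$ is strongly connected (respectively connected), which is precisely why those hypotheses appear in the statement.
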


\begin{prop} \cite{Hamidoune} Let $X=(V, E)$ be a strongly connected
digraph which is not a complete symmetric digraph and let $A$ be a
positive (respectively, negative) atom of $X$. If $B$ is a
positive(respectively, negative)fragment of $X$ with $A\cap
B\neq\emptyset$, then $A\subset B$.
\end{prop}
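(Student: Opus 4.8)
The plan is to argue by contradiction, combining the submodularity of the out-neighbourhood function with the minimality of an atom. As preparation I would record three elementary facts. (i) For arbitrary $A,B\subseteq V$ one has $C^{+}(A\cup B)=C^{+}(A)\cup C^{+}(B)$ and $C^{+}(A\cap B)\subseteq C^{+}(A)\cap C^{+}(B)$, each obtained by following a single arc; in particular, if $A$ and $B$ are positive fragments then $C^{+}(A\cap B)\neq V$. (ii) The submodular inequality $|N^{+}(A\cap B)|+|N^{+}(A\cup B)|\le |N^{+}(A)|+|N^{+}(B)|$ holds for all $A,B\subseteq V$: writing $S=N^{+}(A)$ and $T=N^{+}(B)$, one checks $N^{+}(A\cup B)=(S\cup T)\setminus(A\cup B)$ and $N^{+}(A\cap B)\subseteq(A\cap T)\cup(S\cap B)\cup(S\cap T)$, and the inequality then falls out of a count on the common refinement of the partitions $\{A,S,V\setminus C^{+}(A)\}$ and $\{B,T,V\setminus C^{+}(B)\}$ (using $S\cap A=T\cap B=\varnothing$). (iii) Fragment duality: if $Z$ is a positive fragment then $Z^{*}:=V\setminus C^{+}(Z)$ is a nonempty negative fragment with $N^{-}(Z^{*})=N^{+}(Z)$, and symmetrically for negative fragments; indeed no arc goes from $Z$ to $Z^{*}$, so $N^{-}(Z^{*})\subseteq N^{+}(Z)$, while $|N^{-}(Z^{*})|\ge\kappa(X)$ forces equality.

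Now suppose the statement fails for the positive atom $A$, and among all positive fragments $B$ with $A\cap B\neq\varnothing$ and $A\not\subseteq B$ pick one with $|B|$ minimal. Since $A\not\subseteq B$ we have $\varnothing\neq A\cap B\subsetneq A$ and, by (i), $C^{+}(A\cap B)\neq V$; because $A$ is a minimum positive fragment, $A\cap B$ cannot itself be a positive fragment, so $|N^{+}(A\cap B)|\ge\kappa(X)+1$. Feeding this into (ii) yields $|N^{+}(A\cup B)|\le\kappa(X)-1<\kappa(X)$, and since every proper nonempty $Y\subseteq V$ with $C^{+}(Y)\neq V$ has $|N^{+}(Y)|\ge\kappa(X)$, we are forced into the degenerate situation $C^{+}(A\cup B)=V$; equivalently, by (i), the negative fragments $A^{*}=V\setminus C^{+}(A)$ and $B^{*}=V\setminus C^{+}(B)$ are disjoint.

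This degenerate case $C^{+}(A\cup B)=V$ is the real obstacle, since then $A\cap B$ need not be a fragment and $A\cup B$ has closure $V$. I would handle it by working in the common refinement of $\{A,N^{+}(A),A^{*}\}$ and $\{B,N^{+}(B),B^{*}\}$, which has at most eight nonempty cells. From $|N^{+}(A\cap B)|\ge\kappa(X)+1$, the inclusion $N^{+}(A\cap B)\subseteq(A\cap N^{+}(B))\cup(N^{+}(A)\cap B)\cup(N^{+}(A)\cap N^{+}(B))$ and $|N^{+}(A)|=\kappa(X)$, one extracts $A\cap N^{+}(B)\neq\varnothing$. One then constructs, from $A^{*}$ and $B^{*}$ (pruned so as to have negative neighbourhood of size exactly $\kappa(X)$), a negative fragment $Z$ whose dual positive fragment $B':=V\setminus C^{-}(Z)$ from (iii) still meets $A$, does not contain $A$ because it avoids the nonempty set $A\cap N^{+}(B)$, and satisfies $|B'|<|B|$; this contradicts the minimal choice of $B$. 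One may equally route the argument through the reverse digraph and the negative atom, proving the positive and negative forms of the proposition by a single induction on atom size. I expect the genuinely delicate point to be this construction of the auxiliary fragment $Z$ whose dual is at once strictly smaller than $B$ and still meets $A$; the rest is cell bookkeeping, and the hypothesis that $X$ is not a complete symmetric digraph enters only to ensure $\kappa(X)\le|V|-2$, so that fragments and atoms exist at all and the closures in play are not trivially equal to $V$.
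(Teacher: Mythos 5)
You should first note that the paper offers no proof of this proposition at all: it is quoted verbatim from Hamidoune's 1977 note (reference \cite{Hamidoune}), so there is nothing in the paper to compare your argument against and it must stand on its own. Your preparatory facts (i)--(iii) are all correct: the submodular inequality $|N^{+}(A\cap B)|+|N^{+}(A\cup B)|\le |N^{+}(A)|+|N^{+}(B)|$ does hold via the cell count you indicate, fragment duality is right, and the reduction is sound. With $B$ a minimal counterexample, $A\cap B$ is a nonempty proper subset of the atom $A$ with $C^{+}(A\cap B)\subseteq C^{+}(A)\neq V$, so $|N^{+}(A\cap B)|\ge\kappa(X)+1$, forcing $C^{+}(A\cup B)=V$, i.e.\ $A^{*}\cap B^{*}=\varnothing$; and your extraction of $A\cap N^{+}(B)\neq\varnothing$ is also correct. (In the complementary case $A^{*}\cap B^{*}\neq\varnothing$ one does not even need induction: bounding $N^{-}(A^{*}\cap B^{*})$ by the cells $(A^{*}\cap N^{+}(B))\cup(N^{+}(A)\cap B^{*})\cup(N^{+}(A)\cap N^{+}(B))$ and adding it to the bound on $N^{+}(A\cap B)$ gives exactly $2\kappa(X)$, so both must equal $\kappa(X)$ and $A\cap B$ is already a positive fragment, contradicting minimality of the atom.)

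The gap is exactly where you yourself locate it, and it is not bookkeeping: in the degenerate case you postulate a negative fragment $Z$ built ``from $A^{*}$ and $B^{*}$'' whose dual $B'=V\setminus C^{-}(Z)$ meets $A$, avoids $A\cap N^{+}(B)$, and satisfies $|B'|<|B|$, but no candidate $Z$ is exhibited and none of the three required properties is verified. The natural candidate $Z=A^{*}\cup B^{*}$ fails at the first step: because $A^{*}\cap B^{*}=\varnothing$, submodularity for $N^{-}$ only yields $|N^{-}(A^{*}\cup B^{*})|\le 2\kappa(X)$, so $Z$ need not be a fragment; and even for a genuine fragment $Z$ there is no a priori comparison between $|V\setminus C^{-}(Z)|$ and $|B|$, so the descent that powers your minimal-counterexample scheme is not established. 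Since the case $C^{+}(A\cup B)=V$ is precisely the content of Hamidoune's theorem --- the submodular half is the easy half --- the proof as written is incomplete. One workable way to close it is to first show, by the same nine-cell computation applied to $A$ against the partition $\{B,N^{+}(B),B^{*}\}$, that a positive atom is either contained in or disjoint from every minimum positive cut $N^{+}(B)$; once $A\cap N^{+}(B)=\varnothing$ is known, strong connectivity of $X[A]$ (the paper's Proposition 2.3) forbids $A$ from meeting both $B$ and $B^{*}$, since any directed path in $A$ from $A\cap B$ to $A\cap B^{*}$ would have to pass through $N^{+}(B)$, and hence $A\subseteq B$. Note that this route contradicts your step establishing $A\cap N^{+}(B)\neq\varnothing$, which is a further sign that the degenerate configuration you are trying to handle cannot actually occur and must be excluded by a sharper count rather than by the descent you sketch.
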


\begin{prop} Let $X$ be a strongly digraph with
$\kappa(X)<\delta(X)$, and $A$ be an atom of $X$, then $X[A]$ is
strongly connected.
\end{prop}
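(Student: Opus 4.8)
The plan is to argue by contradiction, deriving from a failure of strong connectivity of $X[A]$ the existence of a fragment of $X$ strictly smaller than the atom $A$. First I would record that the hypothesis $\kappa(X)<\delta(X)$ forces $|A|\geq 2$: for any single vertex $v$ one has $|N^+(\{v\})|=d_X^+(v)\geq\delta(X)>\kappa(X)$ and likewise $|N^-(\{v\})|\geq\delta(X)>\kappa(X)$, so no singleton is a fragment. Next, by replacing $X$ with its reverse digraph $X^{(r)}$ if necessary — which interchanges positive and negative atoms and preserves strong connectivity of induced subdigraphs — I may assume $A$ is a positive atom, so that $|N^+(A)|=\kappa(X)$ and $C^+(A)\neq V$.

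Now suppose $X[A]$ is not strongly connected. Then the condensation of $X[A]$ is a directed acyclic graph on at least two vertices, hence has a sink; let $A_1$ be the vertex set of the strongly connected component of $X[A]$ corresponding to that sink. Then $\emptyset\neq A_1\subsetneq A$, and the key structural observation is that there is no arc of $X$ from $A_1$ to $A\setminus A_1$: such an arc would join two distinct components of $X[A]$ and so would yield an out-arc at the sink of the condensation, a contradiction.

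Using this, I would bound $N^+(A_1)$. Every arc of $X$ leaving $A_1$ has its head in $V\setminus A_1=(A\setminus A_1)\cup(V\setminus A)$; by the previous paragraph no such head lies in $A\setminus A_1$, so every such head lies in $V\setminus A$ and is therefore also the head of an arc leaving $A$. Hence $N^+(A_1)\subseteq N^+(A)$, giving $|N^+(A_1)|\leq\kappa(X)$, and also $C^+(A_1)=A_1\cup N^+(A_1)\subseteq A\cup N^+(A)=C^+(A)\neq V$. Since $X$ is strongly connected and $A_1$ is a nonempty proper subset of $V$, there is at least one arc leaving $A_1$, so $N^+(A_1)\neq\emptyset$; together with $C^+(A_1)\neq V$ this makes $N^+(A_1)$ a vertex-cut, whence $|N^+(A_1)|\geq\kappa(X)$. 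Therefore $|N^+(A_1)|=\kappa(X)$, i.e. $A_1$ is a positive fragment of $X$ with $|A_1|\leq|A|-1<|A|$, contradicting the minimality of the atom $A$. Hence $X[A]$ is strongly connected.

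I expect the only delicate points to be the two bookkeeping reductions — passing to a positive atom via the reverse digraph, and extracting the ``closed'' subset $A_1$ from the non-strongly-connected $X[A]$ — together with making sure $N^+(A_1)$ is genuinely a vertex-cut (which is why $|A|\geq 2$, and hence $\kappa(X)<\delta(X)$, is needed); the neighborhood inclusion $N^+(A_1)\subseteq N^+(A)$ and the final counting are then routine.
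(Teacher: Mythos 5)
Your argument is correct. The paper itself states this proposition without any proof (it is treated as a known fact about atoms, in the spirit of Hamidoune's results cited there), so there is no authorial argument to compare against; your write-up supplies a complete, self-contained justification. The route you take is the standard one: reduce to a positive atom via the reverse digraph, extract a sink component $A_1$ of the condensation of a hypothetically non-strongly-connected $X[A]$, observe that no arc leaves $A_1$ into $A\setminus A_1$ so that $N^+(A_1)\subseteq N^+(A)$ and $C^+(A_1)\subseteq C^+(A)\neq V$, and conclude that $A_1$ is a strictly smaller positive fragment, contradicting minimality of the atom. All the delicate points you flag are handled properly: the hypothesis $\kappa(X)<\delta(X)$ rules out singleton fragments (so the condensation genuinely has at least two components when $X[A]$ fails to be strongly connected), strong connectivity of $X$ gives $N^+(A_1)\neq\emptyset$, and $C^+(A_1)\neq V$ makes $N^+(A_1)$ a legitimate vertex-cut, so $|N^+(A_1)|=\kappa(X)$. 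I see no gap.
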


Clearly if $X=BD(G,T_0,T_1)$ is a strongly connected Bi-Cayley
digraph with $\kappa(X)<\delta(X)$ and $A$ is an atom of $X$, then
$A_i=A\cap X_i\neq\varnothing$ for $i=0, 1$.

\begin{lemma} Let $X=BD(G, T_0, T_1)$ be a strongly connected Bi-Cayley digraph with $\kappa(X)<\delta(X)$. If A is an
atom of X, then
\\(1) $V(X)$ is a disjoint union of distinct
positive(or, negative) atoms of $X$;
\\(2) Let $Y=X[A]$, then $Aut(Y)$ acts transitively both on $A_0$
and $A_1$;\\(3) If  $(1,i)\in A_i=H_i\times\{i\}$, then $H_i$ is the
subgroup of G for i=0,1;\\ (4) $|A_0|=|A_1|$.
\end{lemma}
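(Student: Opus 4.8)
The plan is to exploit the $R(G)$-action together with Proposition 1.5 (atoms are imprimitive blocks) and Proposition 1.6 (a fragment meeting an atom contains it). For part (1), since $A$ is an atom, by Proposition 1.5 it is an imprimitive block for $\mathrm{Aut}(X)$, and in particular for the subgroup $R(G)$. Thus for each $a\in G$ the translate $A^{R(a)}$ is again an atom (automorphisms preserve atoms) and either equals $A$ or is disjoint from it. Since $R(G)$ acts transitively on each of $X_0$ and $X_1$ by Proposition 1.2(1), the translates $\{A^{R(a)} : a\in G\}$ cover every vertex; discarding repetitions gives the required partition of $V(X)$ into disjoint copies of $A$. (Here I should note: all these translates are positive atoms if $A$ is, and negative atoms if $A$ is; the dichotomy is consistent along the orbit.)

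For part (2), let $Y=X[A]$. Set $H=\{a\in G : A^{R(a)}=A\}$, the block stabilizer of $A$ in $R(G)$; because $A$ is a block, $H$ is a subgroup of $G$, and restricting the $R(a)$ with $a\in H$ to $A$ gives a subgroup of $\mathrm{Aut}(Y)$. I would then argue this restricted action is transitive on $A_0$ and on $A_1$ separately: given $(g,i),(g',i)\in A_i$, we have $(g,i)^{R(g^{-1}g')}=(g',i)$, and since this translate of $A$ meets $A$ (it contains $(g',i)$), the block property forces $A^{R(g^{-1}g')}=A$, i.e.\ $g^{-1}g'\in H$. Hence $\mathrm{Aut}(Y)$ already contains enough automorphisms to be transitive on each part.

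For part (3), assume $(1_G,i)\in A_i$, so $A_i=H_i\times\{i\}$ with $H_i=\{g\in G:(g,i)\in A_i\}$. Take any $g\in H_i$. The translate $A^{R(g)}$ contains $(1_G\cdot g,i)=(g,i)\in A_i$, so it meets $A$; by the block property $A^{R(g)}=A$, whence $g\in H$ (the stabilizer from part (2)). Conversely if $g\in H$ then $A^{R(g)}=A$ forces $(g,i)=(1_G,i)^{R(g)}\in A_i$, so $g\in H_i$. Therefore $H_0=H=H_1$; call it $H$. Since $H$ is a group (it is a block stabilizer), $H_i$ is a subgroup of $G$ for $i=0,1$, and moreover $H_0=H_1$. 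Part (4) is then immediate: $|A_0|=|H_0\times\{0\}|=|H|=|H_1\times\{1\}|=|A_1|$; if instead $(1_G,i)\notin A$ one first applies a suitable $R(a)$, which is an automorphism and hence preserves $|A_0|$ and $|A_1|$, to reduce to that case, or one simply observes that left-multiplication by $(g^{-1})$ on the first coordinate of $A_i$ identifies $A_0$ with $H_0$ and $A_1$ with $H_1$ as above.

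The main obstacle I anticipate is the bookkeeping around the positive-versus-negative distinction: an atom need not be purely a positive fragment or purely a negative one, and when invoking Proposition 1.6 or the block property of Proposition 1.5 I must make sure I am comparing fragments of the same type, and that the $R(G)$-orbit of $A$ consists of atoms of a consistent type. Everything else — transitivity of $R(G)$, the fact that block stabilizers are subgroups, and the elementary identity $(g,i)^{R(g^{-1}g')}=(g',i)$ — is routine; the content is in setting up that $H$ is simultaneously the block stabilizer and the set $H_0=H_1$ of group elements appearing in $A$, which is what ties parts (2), (3) and (4) together.
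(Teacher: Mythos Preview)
Your treatment of parts (1), (2), and the core of (3) matches the paper's: use the $R(G)$-action, the block property of atoms (Proposition~2.2), and transitivity on each $X_i$ (Proposition~1.2(1)). The paper's proof is terser but follows the same line.

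There is, however, a genuine gap in your conclusion $H_0=H=H_1$ and hence in your argument for (4). Your argument ``$H_i=H$'' is valid for a fixed $i$ \emph{under the hypothesis} $(1_G,i)\in A_i$, but the lemma does not assume this for both $i$ simultaneously; it is a separate conditional for each $i$. In general one cannot arrange $(1_G,0)\in A_0$ and $(1_G,1)\in A_1$ at once by a single translation $R(a)$. Indeed, immediately after the proof the paper observes that when $(1,0)\in A_0$ one only gets $H_1H_0=H_1$, so $H_1$ is a left coset of $H_0$, not $H_0$ itself. Thus your deduction $|A_0|=|H|=|A_1|$ from $H_0=H_1$ is unsupported.

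The paper proves (4) by a different, purely counting route: from (1) write $V(X)=\bigsqcup_{j=1}^{k}\varphi_j(A)$ with $\varphi_j\in R(G)$; since each $\varphi_j$ preserves the bipartition, this restricts to $X_i=\bigsqcup_{j=1}^{k}\varphi_j(A_i)$, giving $|G|=k|A_0|=k|A_1|$ and hence $|A_0|=|A_1|$. Your approach can be salvaged without claiming $H_0=H_1$: the block stabilizer $H$ you defined acts on each $A_i$, freely (because $R(G)$ is free on $X_i$) and transitively (this is exactly what you proved for part~(2)), hence regularly, so $|A_0|=|H|=|A_1|$. Either route closes the gap; as written, your part~(4) does not.
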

\begin{proof} (1) and (2) follow from the results that the
distinct positive(negative) atoms are disjoint and $Aut(X)$ acts
transitively both on $X_0$ and $X_1$.
\\(3) For any $g\in H_0$, $Ag$ is also a positive
atom  since R(g)$\in$ Aut(X). And $g\in A\cap Ag$, then we get that
$A=Ag$, thus $A_0g=A_0$ and $A_1g=A_1$. The former equality means
that $H_0$ is a subgroup of G.\\(4) From proposition 1.2(1) and
proposition 2.2, we can get $V(X)=\cup_{i=1}^{k}\varphi_{i}(A)$
where $\varphi_{i}\in \emph{Aut(X)}$ such that $\varphi_{i}(A)\cap
\varphi_{j}(A)=\emptyset$ if $i\neq j$, then
$X_{i}=\cup_{i=1}^{k}\varphi_{i}(A_{i})$. Since $|X_{0}|=|X_{1}|$,
we have $|A_{0}|=|A_{1}|$.
\end{proof}

From the proof of lemma 2.4, $Y=X[A]$ has the property that
$d_Y^+((g_i,0))\\=d_Y^+((g_j,0))$ and $d_Y^-((g_i,
0))=d_Y^-((g_j,0))$ for any vertices $(g_i, 0)$, $(g_j, 0)$$\in
A_0$. And if $(1,0)\in A_0$,then $A_1g=A_1$ is right for any $g\in
H_0$, so $H_1H_0=H_1$. It means $H_1$ is a left coset of $H_0$ since
$|H_0|=|H_1|$. We have the following lemma.
\begin{lemma} Let $X=BD(G, T_0, T_1)$ be a strongly connected
Bi-Cayley digraph with $\kappa(X)<\delta(X)$, and $A$ be a positive
atom. Let $A_0=\{g_1, g_2, ... , g_m\}\\ \times\{0\}=H_0\times\{0\}$
and  $A_1=\{g_1^{'}, g_2^{'},
 ... , g_m^{'}\} \times\{1\}=H_1\times\{1\}$. Then\\(1) If
$t_ig_j\in H_1$ for some $t_i\in T_i$ (i=0, 1) and some some
$j(1\leq
 j\leq m)$, then $t_ig_k\in H_1$ for any $k(1\leq k\leq m)$;
 \\(2) If
 $t_1^{-1}g_j^{'}\in H_0$ for some $t_1\in T_1$ and some $j(1\leq
 j\leq m)$, then $t_1^{-1}g_k^{'}\in H_0$ for any $k(1\leq k\leq
 m)$.
\end{lemma}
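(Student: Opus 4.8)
The plan is to reduce both parts to elementary coset arithmetic, using only facts already in place. First I would normalize: since $R(G)$ acts transitively on $X_0$ (Proposition 1.2(1)), after replacing $A$ by $A^{R(g_0)}$ for a suitable $g_0\in G$ we may assume $(1,0)\in A_0$; this replacement leaves $T_0,T_1$ untouched and merely right-translates $H_0,H_1$ and the $g_j,g_j'$ by $g_0$, so the assertions of the lemma for $A$ and for $A^{R(g_0)}$ are literally equivalent. Under this normalization Lemma 2.4(3) gives that $H_0$ is a subgroup of $G$, and the remark following Lemma 2.4 gives $H_1H_0=H_1$ with $|H_1|=|H_0|=m$, so $H_1$ is a single left coset of $H_0$, say $H_1=cH_0$ for any fixed $c\in H_1$. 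Thus $\{g_1,\dots,g_m\}=H_0$ and $\{g_1',\dots,g_m'\}=H_1$, and for every $a\in G$ the sets $aH_0$ and $aH_1$ are again left cosets of $H_0$.

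For part (1): suppose $t_ig_j\in H_1$ for some $t_i\in T_i$ $(i\in\{0,1\})$ and some $j$. Since $g_j\in H_0$ we have $t_ig_j\in t_iH_0$, while by hypothesis $t_ig_j\in H_1$; hence the left cosets $t_iH_0$ and $H_1$ of $H_0$ share the element $t_ig_j$ and therefore coincide, $t_iH_0=H_1$. Then for every $k$ we get $t_ig_k\in t_iH_0=H_1$ because $g_k\in H_0$, which is the claim. (Only the fact that $t_i$ is a fixed element of $G$ is used; whether it lies in $T_0$ or in $T_1$ is irrelevant.)

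For part (2): suppose $t_1^{-1}g_j'\in H_0$ for some $t_1\in T_1$ and some $j$. Since $g_j'\in H_1$ we have $t_1^{-1}g_j'\in t_1^{-1}H_1$, while by hypothesis $t_1^{-1}g_j'\in H_0$; since $t_1^{-1}H_1$ and $H_0$ are both left cosets of $H_0$ with the common element $t_1^{-1}g_j'$, they coincide, $t_1^{-1}H_1=H_0$. Consequently $t_1^{-1}g_k'\in t_1^{-1}H_1=H_0$ for every $k$, as required.

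I do not expect a genuine obstacle: the only points needing care are justifying the normalization $(1,0)\in A_0$ (so that $H_0$ is an honest subgroup rather than just a coset) and recording explicitly, from the discussion after Lemma 2.4, that $H_1$ is exactly one left coset of $H_0$; once these are stated, each part is a one-line invocation of the fact that two left cosets of a subgroup are equal or disjoint. It may be worth noting that the two parts are really the single statement ``for $a\in G$ and left cosets $C,C'\in\{H_0,H_1\}$ of $H_0$, if $aC\cap C'\neq\varnothing$ then $aC=C'$'', and that this homogeneity is precisely what will let one compute $|\omega^+(A)|$ from $|A_0|,|A_1|$ in the sequel.
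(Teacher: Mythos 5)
Your proof is correct and follows essentially the same route as the paper: normalize so that $(1,0)\in A_0$, invoke Lemma 2.4(3) and the remark after it to get that $H_0$ is a subgroup and $H_1$ a single left coset of it, and then conclude by the fact that two left cosets of $H_0$ meeting nontrivially coincide. Your explicit justification of the normalization via $R(G)$-transitivity is a small but welcome addition the paper leaves implicit.
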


\begin{proof}(1) Assume $(1, 0)\in A_0$, then $H_1H_0=H_1$. If $t_ig_j\in H_1$, then $t_ig_jH_0=t_iH_0=H_1$. It means $t_ig_k\in H_1$ for any $k(1\leq k\leq m)$. \\(2) Similarly, assume $(1, 0)\in A_0$, then $H_1H_0=H_1$. If $t_1^{-1}g_j^{'}\in H_0$, then $g_j^{'}\in t_1H_0$. So
$H_1=t_1H_0$. it means that $t_1^{-1}g_k^{'}\in H_0$ for any
$k(1\leq k\leq
 m)$.
\end{proof}

\begin{theorem}
Let $X=BD(G, T_0, T_1)$ be a strongly connected Bi-Cayley digraph,
then $\kappa(X)=\delta(X)$
\end{theorem}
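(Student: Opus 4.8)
The plan is a proof by contradiction via the atom method. Suppose $\kappa(X)<\delta(X)$; passing to the reverse digraph $BD(G,T_1,T_0)$ if necessary, I may assume that $X$ has a positive atom $A$. By Lemma 2.4 and the remarks following it, after translating by a suitable $R(g)\in R(G)$ I may assume $(1_G,0)\in A$; then, writing $A_0=A\cap X_0=H_0\times\{0\}$ and $A_1=A\cap X_1=H_1\times\{1\}$, the set $H_0$ is a subgroup of $G$, $H_1$ is a left coset of $H_0$, and $|H_0|=|H_1|=m$ for some $1\le m<|G|$. Lemma 2.5 moreover tells us that every $t_0\in T_0$ satisfies either $t_0H_0=H_1$ or $t_0H_0\cap H_1=\emptyset$, and every $t_1\in T_1$ satisfies either $t_1^{-1}H_1=H_0$ or $t_1^{-1}H_1\cap H_0=\emptyset$.

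The core step is to evaluate $\kappa(X)=|N^+(A)|$ in terms of cosets of $H_0$. Since $X$ is bipartite, $N^+(A)$ is the disjoint union of $N^+(A_0)\subseteq X_1\setminus A_1$ and $N^+(A_1)\subseteq X_0\setminus A_0$. For $g\in H_0$, the out-neighbours of $(g,0)$ lying outside $A$ are exactly the vertices $(t_0g,1)$ with $t_0\in T_0$ and $t_0H_0\neq H_1$; letting such $t_0$ and all $g\in H_0$ vary, these vertices sweep out $C_0\times\{1\}$, where $C_0$ is a union of some number $b_0$ of distinct left cosets of $H_0$, none of them equal to $H_1$. Hence $|N^+(A_0)|=b_0m$. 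The symmetric computation for $A_1$ — crucially using that $H_1$, and therefore each $t_1^{-1}H_1$ with $t_1\in T_1$, is a left coset of $H_0$ — gives $|N^+(A_1)|=b_1m$ for the corresponding number $b_1$ of cosets. Therefore $\kappa(X)=m(b_0+b_1)$.

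Next I would bound $\delta(X)$ with the same data. Those $t_0\in T_0$ with $t_0H_0=H_1$ all lie in $H_1$, so there are at most $|H_1|=m$ of them, while the remaining elements of $T_0$ lie inside a union of $b_0$ cosets of size $m$; hence $|T_0|\le m(b_0+1)$, and symmetrically $|T_1|\le m(b_1+1)$, giving $\delta(X)=\min\{|T_0|,|T_1|\}\le m(\min\{b_0,b_1\}+1)$. On the other hand, $X$ is strongly connected and $A$ is a proper nonempty subset of $V(X)$, so $N^+(A)\neq\emptyset$, i.e. $b_0+b_1\ge1$; thus $\max\{b_0,b_1\}\ge1$ and $b_0+b_1=\min\{b_0,b_1\}+\max\{b_0,b_1\}\ge\min\{b_0,b_1\}+1$. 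Combining, $\kappa(X)=m(b_0+b_1)\ge m(\min\{b_0,b_1\}+1)\ge\delta(X)$, contradicting $\kappa(X)<\delta(X)$. Since $\kappa(X)\le\delta(X)$ always holds, we conclude $\kappa(X)=\delta(X)$.

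The step I expect to be the main obstacle is the coset bookkeeping in the second paragraph: one must be careful that $H_0$ (the ``part-$0$'' of the atom) is the subgroup while $A_1$ is a \emph{left} coset of it, so that $N^+(A_1)\subseteq X_0$ — which is assembled from $T_1^{-1}$ acting on $H_1$ rather than from $T_0$ acting on $H_0$ — is still a union of left cosets of the \emph{same} subgroup $H_0$; this is exactly what forces both $|N^+(A_0)|$ and $|N^+(A_1)|$ to be multiples of $m$ and lets the two halves of the bipartition be weighed against each other. A minor point is that $b_0$ or $b_1$ may be $0$ (one side of $A$ sends no arcs out of $A$), but this is harmless, being absorbed into the single inequality $\max\{b_0,b_1\}\ge1$ that strong connectivity provides.
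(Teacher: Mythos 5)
Your proof is correct and follows essentially the same route as the paper's: it rests on the atom machinery of Lemmas 2.4 and 2.5, counts $N^+(A)$ as a union of left cosets of $H_0$ in each part of the bipartition, and plays that count off against $|T_0|$ and $|T_1|$. The only cosmetic difference is that you package the paper's two-case analysis (according to which of $N^+(A_0)\setminus A_1$ and $N^+(A_1)\setminus A_0$ is nonempty) into the single inequality $b_0+b_1\ge\min\{b_0,b_1\}+1$.
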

\begin{proof} Suppose $X$ is not max-$\kappa$. Without loss of
generality, assume that $A=A_0\cup A_1$ is a positive atom. Denote
$A_0=H_0\times \{0\}$ and $A_1=H_1\times \{1\}$.  If
$|N^+(A_0)\setminus A_1|\neq 0$, then by lemma 2.5  we have
$|N^+(A_0)\setminus A_1|\geq |H_0|$. Thus
$|N^+(A)|=|N^+(A_0)\setminus A_1|+|N^+(A_1)\setminus
A_0|=|N^+(A_0)\setminus A_1|+|\{T_1^{-1}H_1\setminus
H_0\}\times\{0\}|\geq |H_0|+|T_1^{-1}\setminus H_0|\geq
|T_1^{-1}|\geq \delta (X)$, a contradiction. If $|N^+(A_1)\setminus
A_0|\neq 0$, then by lemma 2.5  we have $|N^+(A_1)\setminus A_0|\geq
|H_1|$. Thus $|N^+(A)|=|N^+(A_0)\setminus A_1|+|N^+(A_1)\setminus
A_0|=|\{T_0H_0\setminus H_1\}\times\{1\}|+|N^+(A_0)\setminus
A_1|\geq |T_0\setminus H_1|+|H_1|\geq |T_0|\geq \delta (X)$, a
contradiction. Therefore $N^+(A)=\emptyset$, it is a contradiction .
\end{proof}

\begin{corollary}
Let $X=BD(G, T_0, T_1)$ be a strongly connected Bi-Cayley digraph,
then $\kappa(X)=\lambda(X)=\delta(X)$.
\end{corollary}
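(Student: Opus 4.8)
The plan is to obtain the statement directly from Theorem 2.7 together with the elementary inequality chain $\kappa(X)\le\lambda(X)\le\delta(X)$ recalled at the beginning of this section. Since $X$ is a strongly connected Bi-Cayley digraph, Theorem 2.7 applies and gives $\kappa(X)=\delta(X)$. Substituting this into the chain yields
\[
\delta(X)=\kappa(X)\le\lambda(X)\le\delta(X),
\]
so all three quantities must coincide, which is exactly the assertion $\kappa(X)=\lambda(X)=\delta(X)$.

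For completeness I would recall in one sentence why the two inequalities in the chain hold, both of which were already noted above: the bound $\lambda(X)\le\delta(X)$ follows by taking the subset $A$ in the definition of $\omega^{+}$ to be a single vertex (or the complement of a single vertex), so that $\omega^{+}(A)$ is the set of outarcs (or inarcs) of that vertex; and $\kappa(X)\le\lambda(X)$ is the standard observation that from a minimum arc disconnecting set one can extract a vertex disconnecting set of no larger cardinality.

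I do not expect any genuine obstacle here: once Theorem 2.7 is available, the corollary is a one-line sandwiching argument. The only conceptual point worth stating explicitly is that maximal (vertex-)connectivity is the strongest of the three conditions $\kappa=\delta$, $\lambda=\delta$, $\delta=\delta$, so establishing $\kappa(X)=\delta(X)$ automatically forces maximal arc-connectivity $\lambda(X)=\delta(X)$ as well.
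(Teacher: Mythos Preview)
Your argument is correct and is exactly the (implicit) reasoning the paper intends: the corollary is stated without proof because the inequality chain $\kappa(X)\le\lambda(X)\le\delta(X)$, combined with the preceding theorem, immediately gives the result. The only slip is a numbering one: the result you invoke is Theorem~2.6, not Theorem~2.7 (the latter is the corollary itself).
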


\section{Super arc-connectivity}

A \emph{weak path} of a digraph $X$ is a sequence $u_0,...,u_r$ of
distinct vertices such that for $i=1,...,r,$ either $(u_{i-1},u_i)$
or $(u_i,u_{i-1})$ is an arc of $X$. A directed graph is
\emph{weakly connected} if any two vertices can be joined by a weak
path.

\begin{prop} Let $X=BD(G,T_0,T_1)$ be a strongly connected Bi-Cayley digraph and $A$ be a $\lambda-$superatom, then \\(1) $Y=X[A]$ is weakly
connected;\\(2) $|A|\geq \delta(X)$.
\end{prop}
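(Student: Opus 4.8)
The plan is to derive both statements from elementary arc-counting, leaning on Corollary 2.8 ($\lambda(X)=\delta(X)$) and on the basic fact recorded just before Proposition 2.1 that $|\omega^+(S)|\geq\lambda(X)$ (and likewise $|\omega^-(S)|\geq\lambda(X)$) for every proper nonempty $S\subseteq V$. A $\lambda$-superatom $A$ is by definition a positive or a negative arc fragment, and I would treat the positive case throughout; the negative case is the mirror image, handled either by arguing with $\omega^-$ and in-degrees, or simply by passing to the reverse digraph $X^{(r)}$, which is again a strongly connected Bi-Cayley digraph (indeed $X^{(r)}=BD(G,T_1,T_0)$, and the reverse of a strongly connected digraph is strongly connected), under which $A$ becomes a positive superatom of the same cardinality.

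For (1) I would argue by contradiction. If $Y=X[A]$ is not weakly connected, then $A$ splits as a disjoint union $A=A'\cup A''$ of nonempty sets with no arc of $X$ in either direction between $A'$ and $A''$. Since $A$ is a strict arc fragment, $A'$ and $A''$ are proper nonempty subsets of $V$. The absence of arcs between them forces $\omega^+(A)$ to be the disjoint union of $\omega^+(A')$ and $\omega^+(A'')$, so $\lambda(X)=|\omega^+(A)|=|\omega^+(A')|+|\omega^+(A'')|\geq 2\lambda(X)$, hence $\lambda(X)\leq 0$ — impossible, since $\lambda(X)=\delta(X)\geq 1$ for a strongly connected digraph. (This step is just the pigeonhole principle and uses nothing special about Bi-Cayley digraphs.)

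For (2) I would set $A_i=A\cap X_i$ and count the arcs of $X$ with tail in $A$ two ways. On one hand their number is $\sum_{v\in A}d_X^+(v)=|A_0|\,|T_0|+|A_1|\,|T_1|$; on the other it is $a(Y)+|\omega^+(A)|$, where $a(Y)$ is the number of arcs of $Y$. Since $\delta(X)=\min\{|T_0|,|T_1|\}$, every out-degree of $X$ is at least $\delta(X)$, and $|\omega^+(A)|=\lambda(X)=\delta(X)$; this gives the lower bound $a(Y)\geq(|A|-1)\delta(X)$. On the other hand $Y$ is bipartite with parts $A_0,A_1$, both nonempty (otherwise $Y$ has no arcs and $a(Y)=0<(|A|-1)\delta(X)$, using $|A|\geq 2$), so $a(Y)\leq 2|A_0|\,|A_1|\leq\tfrac12|A|^2$ by AM--GM. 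Combining the two bounds yields $2(|A|-1)\delta(X)\leq|A|^2$, and a short manipulation shows that $|A|\leq\delta(X)-1$ would force $|A|^2\leq 2$, contradicting $|A|\geq 2$. Hence $|A|\geq\delta(X)$.

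I do not expect a genuine obstacle: once the right quantities are written down, each part is essentially a one-line counting argument. The two things to be careful about are the positive/negative dichotomy for superatoms — dealt with uniformly by the reverse-digraph remark — and the fact that for a superatom we have, and need, no coset structure on $A_0$ or $A_1$ (unlike the atoms of Lemma 2.4), so the argument must stay purely at the level of counting arcs and the elementary degree identities of Bi-Cayley digraphs.
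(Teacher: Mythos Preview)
Your proof is correct. Part~(1) is essentially the paper's argument, stated more cleanly: the paper treats $|A|=2$ separately and for $|A|\geq 3$ says one weak component would be a smaller strict arc fragment, whereas your uniform inequality $|\omega^+(A')|+|\omega^+(A'')|\geq 2\lambda(X)$ covers all cases at once.

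Part~(2) takes a genuinely different route. The paper uses only the crude simple-digraph bound---each vertex of $A$ has at most $|A|-1$ out-arcs inside $A$---to obtain
\[
\lambda(X)=|\omega^+(A)|\geq |A|\bigl(\delta(X)-|A|+1\bigr)=\delta(X)+(|A|-1)\bigl(\delta(X)-|A|\bigr),
\]
which visibly exceeds $\delta(X)$ whenever $2\leq |A|\leq \delta(X)-1$. This is shorter and makes no use of the bipartite or Bi-Cayley structure; the statement in fact holds for an arbitrary strongly connected digraph. Your argument instead exploits bipartiteness to get the sharper upper bound $a(Y)\leq 2|A_0|\,|A_1|\leq |A|^2/2$ and then runs the algebra from there. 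That refinement is unnecessary for the conclusion but does no harm, and your reduction of the negative case to the positive one via $X^{(r)}=BD(G,T_1,T_0)$ is correct and tidy.
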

\begin{proof} Suppose $A$ is a positive $\lambda-$superatom.\\(1) If $|A|=2$, then we obtain that $A$ is not an
independent set since $|N^+(A)|=\delta(X)$ and $N^+(u)\neq 0$ for
any $u\in V(X)$. Now assume $|A|\geq3$. If $Y=X[A]$ is not weakly
connected, we can get a $\lambda-$superatom with cardinality less
than $A$, a contradiction.
\\(2)
$\lambda(X)=|\omega_X^{+}(A)|\geq|A|(\delta(X)-(|A|-1))=|A|(\delta(X)-|A|+1)$,
\\we can verify that $\lambda(X)>\delta(X)$ when
$2\leq|A|<\delta(X)$,  a contradiction.
\end{proof}

Any digraph with $d^+(x)=d^-(x)$ for every vertex $x$ of $X$ is said
to be a \emph{balanced} digraph.

\begin{prop}\cite{HamTin} Let $X=(V,E)$ be a strongly
connected, balanced digraph and let $A$ and $B$ be arc fragments of
X such that $A\nsubseteq B$ and $B\nsubseteq A$. If $A\cap
B\neq\varnothing$ and $A\cup B\neq V$, then each of the sets $A\cap
B$, $A\cup B$, $A\setminus B$ and $B\setminus A$ is an  arc
fragments of  $X$.
\end{prop}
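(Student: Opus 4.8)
The plan is to deduce everything from the classical submodular inequality for arc boundaries together with one ``cross'' inequality, the \emph{balanced} hypothesis entering at exactly one point. First I would record two elementary facts. (i) If $X$ is balanced, then $|\omega^+(S)|=|\omega^-(S)|$ for every $S\subseteq V$: indeed $\sum_{x\in S}d^+(x)$ and $\sum_{x\in S}d^-(x)$ both equal (number of arcs of $X[S]$) plus the respective boundary, and the two sums agree because $d^+(x)=d^-(x)$ for all $x$. In particular a positive arc fragment is a negative arc fragment and conversely, so in a balanced digraph we may speak simply of \emph{arc fragments}. (ii) For every proper nonempty $S\subseteq V$ one has $|\omega^+(S)|\geq\lambda(X)$ and $|\omega^-(S)|\geq\lambda(X)$, the latter because $\omega^-(S)=\omega^+(V\setminus S)$ and $V\setminus S$ is again proper and nonempty; note also that an arc fragment is automatically a \emph{proper} subset of $V$, since $\omega^+(V)=\varnothing$.

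Next I would set up the standard four-cell accounting: put $P_1=A\cap B$, $P_2=A\setminus B$, $P_3=B\setminus A$, $P_4=V\setminus(A\cup B)$, and let $a_{ij}$ be the number of arcs of $X$ from $P_i$ to $P_j$. Expanding the relevant boundaries in the $a_{ij}$ and cancelling yields the two inequalities
\begin{align}
|\omega^+(A\cap B)|+|\omega^+(A\cup B)| &\leq |\omega^+(A)|+|\omega^+(B)|, \\
|\omega^+(A\setminus B)|+|\omega^-(B\setminus A)| &\leq |\omega^+(A)|+|\omega^-(B)|,
\end{align}
with slack $a_{23}+a_{32}\geq 0$ in the first and slack $a_{14}+a_{41}\geq 0$ in the second. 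The second inequality is the delicate one: with positive boundaries on both sides the analogous statement is \emph{false} (the terms $a_{12},a_{13},a_{21},a_{31}$ refuse to cancel), so one is forced to pair a positive boundary of $A$ with a negative boundary of $B$, and this is precisely where balancedness gets used — to rewrite $|\omega^-(B)|$ as $\lambda(X)$ at the end.

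Finally I would combine. Since $A$ and $B$ are arc fragments and $X$ is balanced, fact (i) gives $|\omega^+(A)|=|\omega^-(A)|=|\omega^+(B)|=|\omega^-(B)|=\lambda(X)$, so the right-hand side of each displayed inequality equals $2\lambda(X)$. The hypotheses $A\nsubseteq B$, $B\nsubseteq A$, $A\cap B\neq\varnothing$ and $A\cup B\neq V$ make $P_1,P_2,P_3$ nonempty and $A\cup B$ a nonempty set, while each of $P_1,P_2,P_3,A\cup B$ is a proper subset of $V$ (being contained in $A$, in $B$, or in $A\cup B\subsetneq V$). Hence by fact (ii) each of the four quantities on the left contributes at least $\lambda(X)$ to its inequality, forcing every one of them to equal $\lambda(X)$. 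Thus $A\cap B$ and $A\cup B$ are positive arc fragments and $A\setminus B$, $B\setminus A$ are a positive and a negative arc fragment respectively; by fact (i) all four are arc fragments of $X$, as claimed.

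The only real obstacle is identifying the right cross-inequality in the second display; once one sees that the difference sets must be handled by mixing a positive with a negative boundary and appealing to balancedness, the remainder is bookkeeping with the $a_{ij}$ and the trivial bounds $|\omega^{\pm}(S)|\geq\lambda(X)$ for proper nonempty $S$.
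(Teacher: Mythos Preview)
Your argument is correct. Note, however, that the paper itself does not supply a proof of this proposition: it is quoted from Hamidoune and Tindell and stated without argument, so there is no proof in the paper to compare against. The approach you take --- the submodular inequality for $\omega^{+}$ together with the mixed $\omega^{+}/\omega^{-}$ cross-inequality, each combined with the trivial lower bound $|\omega^{\pm}(S)|\geq\lambda(X)$ for proper nonempty $S$, with balancedness invoked precisely to identify positive and negative arc boundaries --- is the standard one and is essentially what appears in the cited source.
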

\begin{theorem}\cite{HamTin}  Let $X=(V,E)$ be a strongly
connected balanced digraph which is not a symmetric cycle, is not
super arc-connected and has $\delta(X)\geq2$. If $\delta(X)>2$ or
$X$ is vertex-transitive, then distinct $\lambda-$superatoms of $X$
are vertex disjoint.
\end{theorem}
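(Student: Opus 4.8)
The plan is a proof by contradiction. Suppose $A$ and $B$ are distinct $\lambda$-superatoms with $A\cap B\neq\varnothing$; write $s$ for the common cardinality $|A|=|B|$ (all $\lambda$-superatoms are strict arc fragments of the least possible size). Since $A\neq B$ and $|A|=|B|$, neither set contains the other. Because $X$ is balanced we have $|\omega^+(C)|=|\omega^-(C)|$ for every $C\subseteq V$, so positive and negative arc fragments coincide and, in particular, $V\setminus A$ is again a strict arc fragment (its cardinality lies between $2$ and $|V|-2$ exactly when $A$'s does); minimality of $s$ then gives $|V\setminus A|\ge s$, i.e. $|V|\ge 2s$. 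Hence $|A\cup B|\le|A|+|B|=2s\le|V|$, and equality would force $A\cap B=\varnothing$; so $A\cup B\neq V$, and Proposition 3.2 applies to the pair $A,B$.

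I would then drive $s$ down to $2$. By Proposition 3.2, $A\cap B$ is an arc fragment with $1\le|A\cap B|<s$; were $|A\cap B|\ge 2$ it would be a strict arc fragment strictly smaller than a $\lambda$-superatom, a contradiction --- so $|A\cap B|=1$. Again by Proposition 3.2, $A\setminus B$ is an arc fragment of cardinality $s-1<s$; were $s-1\ge 2$ it would likewise be a strict arc fragment smaller than a $\lambda$-superatom, a contradiction --- so $s=2$. Thus \emph{every} $\lambda$-superatom has exactly two vertices, and we may write $A=\{x,y\}$, $B=\{y,z\}$ with $x,y,z$ pairwise distinct. Counting outarcs, $\lambda(X)=|\omega^+(\{x,y\})|\ge 2\delta^+(X)-2\ge 2\delta(X)-2$; together with $\lambda(X)\le\delta(X)$ this forces $\delta(X)\le 2$. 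If $\delta(X)>2$ this is already the contradiction we want; so from now on $X$ is vertex-transitive and $\delta(X)=\lambda(X)=2$.

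With $\delta(X)=2$ the inequalities above are tight, which pins down the local structure: from $|\omega^+(\{x,y\})|=2$ and $d^+(x),d^+(y)\ge 2$ we get $d^+(x)=d^+(y)=2$, that both arcs $x\to y$ and $y\to x$ are present, and that exactly one outarc of $x$ and one of $y$ leave $\{x,y\}$; running the same argument with $B$ and using $x\neq z$ yields $N^+(y)=\{x,z\}$, and balancedness (so $|\omega^-(\{x,y\})|=2$ too) gives $N^-(y)=\{x,z\}$ and $d^-(x)=d^-(y)=d^-(z)=2$. Now vertex-transitivity enters: automorphisms of $X$ permute the $\lambda$-superatoms, and $y$ lies in the two distinct superatoms $A,B$, so every vertex of $X$ lies in at least two distinct $\lambda$-superatoms. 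Let $H$ be the simple graph on $V(X)$ whose edges are the $\lambda$-superatoms (each of size $2$); then $H$ has minimum degree $\ge 2$ on at least $3$ vertices, hence contains a cycle $v_1v_2\cdots v_kv_1$ with $k\ge 3$. Applying the local analysis to each consecutive pair $\{v_{i-1},v_i\}$, $\{v_i,v_{i+1}\}$ (valid since $v_{i-1}\neq v_{i+1}$) gives $N^+(v_i)=N^-(v_i)=\{v_{i-1},v_{i+1}\}$ for all $i$ mod $k$. Consequently no arc of $X$ leaves $W=\{v_1,\dots,v_k\}$, so strong connectivity forces $W=V(X)$; but then the arcs of $X$ are exactly the pairs $(v_i,v_{i+1})$ and $(v_{i+1},v_i)$, i.e. $X$ is the symmetric $k$-cycle --- contradicting the hypothesis that $X$ is not a symmetric cycle.

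I expect the main obstacle to be the last paragraph: wringing enough rigidity out of the overlap of two size-$2$ superatoms in a balanced vertex-transitive digraph with $\delta=2$ to propagate the doubled-cycle structure all the way around a cycle of $H$ and conclude that $W$ exhausts $V(X)$. The earlier steps --- the submodular manipulations supplied by Proposition 3.2 together with the attendant cardinality bookkeeping --- are routine.
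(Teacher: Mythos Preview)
The paper does not supply its own proof of this theorem: Theorem~3.3 is quoted from \cite{HamTin} and stated without argument, so there is no in-paper proof to compare against. Your reconstruction is sound and is essentially the standard Hamidoune--Tindell argument: the submodular step via Proposition~3.2 forcing $|A\cap B|=1$ and then $s=2$, the degree count giving $\delta(X)\le 2$, and in the vertex-transitive $\delta=2$ case the propagation along a cycle of the ``superatom graph'' $H$ to exhibit $X$ as a symmetric cycle. Each step checks out; in particular, your verification that $A\cup B\neq V$ (via $|V|\ge 2s$ from the balancedness of $X$ and minimality of $s$), and your use of vertex-transitivity to guarantee that \emph{every} vertex lies in at least two $\lambda$-superatoms so that $H$ has minimum degree $\ge 2$, are exactly the right moves.

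For context, the only place the paper argues anything close to this is Lemma~3.6, which treats the Bi-Cayley case directly: there the $\delta(X)\ge 3$ case is disposed of by invoking Theorem~3.5 (the positive/negative analogue of Theorem~3.3), while the cases $\delta(X)=1,2$ are handled by exploiting the Bi-Cayley structure to read off $|T_0|,|T_1|$ and conclude that $X$ is a directed or symmetric cycle. Your argument is more general --- it works for arbitrary balanced (vertex-transitive) digraphs rather than relying on the Cayley-type description --- at the cost of the extra combinatorial work in your final paragraph, which the paper sidesteps by using the group structure.
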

Similarly, we can also achieve the analogous results.
\begin{prop} Let $X=(V,E)$ be a strongly
connected digraph and let $A$ and $B$ be positive(respectively,
negative) arc fragments of X such that $A\nsubseteq B$ and
$B\nsubseteq A$. If $A\cap B\neq\varnothing$ and $A\cup B\neq V$,
then each of the sets $A\cap B$, $A\cup B$, $A\setminus B$ and
$B\setminus A$ is a positive(respectively, negative) arc fragments
of $X$.
\end{prop}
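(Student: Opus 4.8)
The plan is to reproduce, for the out-boundary operator $\omega^+$ (the $\omega^-$ case being the mirror image), the submodularity argument that proves Proposition 3.3, and to track which conclusions survive when $X$ is not balanced.

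First I would fix the partition of $V$ into the four pairwise-disjoint pieces $P_1=A\cap B$, $P_2=A\setminus B$, $P_3=B\setminus A$, $P_4=V\setminus(A\cup B)$; by hypothesis $P_1,P_2,P_3$ are nonempty, while $P_4$ may be empty. Writing $e(P,Q)$ for the number of arcs with tail in $P$ and head in $Q$, one expands each relevant boundary, e.g.
\[
|\omega^+(A)|=e(P_1,P_3)+e(P_1,P_4)+e(P_2,P_3)+e(P_2,P_4),
\]
and similarly for $B$, for $A\cap B=P_1$, and for $A\cup B=P_1\cup P_2\cup P_3$. Comparing the sum of the first two with the sum of the last two yields the identity
\[
|\omega^+(A)|+|\omega^+(B)|=|\omega^+(A\cap B)|+|\omega^+(A\cup B)|+e(P_2,P_3)+e(P_3,P_2).
\]
Each arc fragment is a nonempty proper subset of $V$ (since $\lambda(X)\ge1$ forces its boundary to be nonempty), so $A\cap B$, being nonempty and contained in $A\neq V$, and $A\cup B$, being nonempty and proper by hypothesis, both have boundary at least $\lambda(X)$. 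As the left side equals $2\lambda(X)$, equality holds throughout: $A\cap B$ and $A\cup B$ are positive arc fragments, and moreover $e(P_2,P_3)=e(P_3,P_2)=0$, i.e.\ no arc joins $A\setminus B$ to $B\setminus A$ in either direction. For negative arc fragments one runs the same computation with arcs grouped by their head.

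For $A\setminus B=P_2$ and $B\setminus A=P_3$ I would use the ``crossing'' form of the identity. Expanding $|\omega^+(A)|+|\omega^-(B)|$ in the $e(P_i,P_j)$ gives
\[
|\omega^+(A)|+|\omega^-(B)|=|\omega^+(A\setminus B)|+|\omega^-(B\setminus A)|+e(P_1,P_4)+e(P_4,P_1),
\]
hence $|\omega^+(A\setminus B)|+|\omega^-(B\setminus A)|\le|\omega^+(A)|+|\omega^-(B)|$, and symmetrically with $A$ and $B$ interchanged. If $X$ is balanced then $|\omega^-(B)|=|\omega^+(B)|=\lambda(X)$ and $|\omega^-(B\setminus A)|=|\omega^+(B\setminus A)|$, so both left-hand summands are $\ge\lambda(X)$ while the right side is $2\lambda(X)$, forcing $A\setminus B$ and $B\setminus A$ to be arc fragments; this is precisely how Proposition 3.3 obtains all four sets. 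This step is where I expect the real difficulty: for a general, unbalanced $X$ a positive arc fragment need not have $|\omega^-|=\lambda(X)$, so the mixed inequality no longer closes by itself, and one must feed in the extra structure already obtained — that $A\cap B$ and $A\cup B$ are fragments, that $e(P_2,P_3)=e(P_3,P_2)=0$, and the degree relations forced by the equalities of the first stage — in order to control $|\omega^+(A\setminus B)|$ and $|\omega^+(B\setminus A)|$. I would therefore organise the write-up in two clearly separated stages, proving the $A\cap B$ and $A\cup B$ statements unconditionally and then isolating exactly what is needed for the $A\setminus B$ and $B\setminus A$ statements.
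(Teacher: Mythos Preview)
Your submodularity argument for $A\cap B$ and $A\cup B$ is correct and is exactly the standard computation; that part of the statement is unconditionally true. The paper itself supplies no proof of this proposition at all: it simply writes ``Similarly, we can also achieve the analogous results'' after quoting the balanced case (Proposition~3.2, from Hamidoune--Tindell), so there is no paper argument to compare with beyond the implicit claim that the balanced proof carries over verbatim.

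The difficulty you flag for $A\setminus B$ and $B\setminus A$ is real and cannot be removed: the $A\setminus B$, $B\setminus A$ half of the proposition is \emph{false} for general (unbalanced) digraphs, so no amount of ``extra structure already obtained'' will close the gap. A concrete counterexample on six vertices $V=\{a_1,a_2,b_1,b_2,c,d\}$ with arc set
\[
\begin{aligned}
&a_1\to c,\ a_2\to c,\ a_1\to d,\ a_1\to a_2,\ a_2\to a_1,\ c\to a_1,\ c\to b_1,\\
&b_1\to d,\ b_1\to b_2,\ b_2\to b_1,\ b_2\to c,\ d\to a_2,\ d\to b_2
\end{aligned}
\]
is strongly connected with $\delta(X)=\lambda(X)=2$. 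The sets $A=\{a_1,a_2,c\}$ and $B=\{b_1,b_2,c\}$ satisfy $|\omega^+(A)|=|\omega^+(B)|=2$, so both are positive arc fragments, and $A\cap B=\{c\}\ne\varnothing$, $A\cup B\ne V$, $A\nsubseteq B$, $B\nsubseteq A$. Yet $|\omega^+(A\setminus B)|=|\omega^+(\{a_1,a_2\})|=3>\lambda(X)$, so $A\setminus B$ is not a positive arc fragment. (Consistently with your first stage, one checks $|\omega^+(A\cap B)|=|\omega^+(A\cup B)|=2$ and there are no arcs between $A\setminus B$ and $B\setminus A$.) Your instinct that the crossing inequality is the obstruction was correct; the proposition as stated only survives for $A\cap B$ and $A\cup B$, and the balanced hypothesis in Proposition~3.2 is doing essential work for the other two pieces.
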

\begin{theorem} Let $X=(V,E)$ be a strongly
connected digraph which is not a symmetric cycle, is not super
arc-connected and has $\delta(X)\geq2$. If $\delta(X)>2$ or $X$ is
vertex-transitive, then distinct positive(respectively,
negative)$\lambda-$superatoms of $X$ are vertex disjoint.
\end{theorem}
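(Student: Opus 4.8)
The plan is to split the proof into the two situations permitted by the hypothesis, and in the vertex-transitive one to piggy-back on the balanced result already quoted as Theorem 3.3.

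Suppose first that $X$ is vertex-transitive. Then all out-degrees are equal and all in-degrees are equal, and counting the arcs of $X$ in two ways forces these two common values to coincide, so $X$ is balanced. For a balanced digraph, comparing for any $S\subseteq V$ the arcs leaving $S$ with the arcs having both ends in $S$ gives $|\omega^+(S)|=|\omega^-(S)|$; hence the positive and the negative arc fragments of $X$ are exactly the same sets, every $\lambda$-superatom is simultaneously a positive and a negative $\lambda$-superatom, and ``positive (negative) $\lambda$-superatom'' is just another name for ``$\lambda$-superatom''. As $X$ is then a strongly connected balanced digraph, not a symmetric cycle, not super arc-connected, with $\delta(X)\ge 2$ and vertex-transitive, Theorem 3.3 applies verbatim and yields the claim in this case.

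Now suppose $\delta(X)>2$ and argue by contradiction: let $A$ and $B$ be distinct positive $\lambda$-superatoms with $A\cap B\neq\emptyset$. Since both have the least possible strict cardinality, $A\not\subseteq B$ and $B\not\subseteq A$, so $A\cap B$ and $A\setminus B$ are nonempty proper subsets of $A$. First I record that $|A|\ge\delta(X)$: exactly as in the computation proving Proposition 3.1(2), $\lambda(X)=|\omega^+(A)|\ge|A|(\delta(X)-(|A|-1))$ combined with $\lambda(X)\le\delta(X)$ forces $|A|\ge\delta(X)$ (divide by $|A|-1\ge1$). Now distinguish whether $A\cup B$ is all of $V$. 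If $A\cup B\ne V$, Proposition 3.4 shows $A\cap B$ and $A\setminus B$ are positive arc fragments; each lies inside the strict set $A$, hence has cardinality at most $|V|-2$ and strictly less than $|A|$, so if either had cardinality $\ge2$ it would be a strict arc fragment smaller than the $\lambda$-superatom $A$ -- impossible. Therefore $|A\cap B|=|A\setminus B|=1$, so $|A|=2$, contradicting $|A|\ge\delta(X)>2$. If instead $A\cup B=V$, then since $\omega^-(V\setminus A)=\omega^+(A)$ the set $V\setminus A$ is a negative arc fragment, and as $A$ is strict so is $V\setminus A$; minimality of the superatom then gives $|A|\le|V\setminus A|=|V|-|A|$, i.e. $|V|\ge2|A|$, whereas $|V|=|A\cup B|=|A|+|B|-|A\cap B|=2|A|-|A\cap B|<2|A|$ because $A\cap B\neq\emptyset$ -- again a contradiction. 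This settles the positive statement; the negative statement follows by applying it to the reverse digraph $X^{(r)}$, in which the negative $\lambda$-superatoms of $X$ are precisely the positive $\lambda$-superatoms.

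The one place that needs care is the subcase $A\cup B=V$: there Proposition 3.4 is useless, since one of the four sets it produces would be $A\cup B=V$ itself, so instead one must switch to the complementary set $V\setminus A$, observe that it is a strict arc fragment of the \emph{opposite} (negative) type, and play its cardinality against the cardinality that defines a $\lambda$-superatom. This step relies on reading ``$\lambda$-superatom'' as a minimum-cardinality strict arc fragment among \emph{all} strict arc fragments, positive and negative alike, which is the definition used in the paper. Everything else is routine; note in particular that the hypothesis ``not super arc-connected'', and the bare bound $\delta(X)\ge2$ (as opposed to $\delta(X)>2$), are invoked only through Theorem 3.3 in the vertex-transitive branch.
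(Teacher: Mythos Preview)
Your proof is correct. The paper itself does not write out a proof of this theorem: after stating Proposition~3.4 and Theorem~3.5 it merely remarks ``Similarly, we can also achieve the analogous results,'' leaving the reader to transcribe the Hamidoune--Tindell argument for balanced digraphs (Theorem~3.3) using Proposition~3.4 in place of Proposition~3.2. Your $\delta(X)>2$ branch is exactly that transcription, and your vertex-transitive branch cleanly reduces to Theorem~3.3 via the (correct) observation that vertex-transitivity forces the digraph to be balanced, whereupon positive and negative arc fragments coincide and ``positive $\lambda$-superatom'' is synonymous with ``$\lambda$-superatom.''

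One point where you are in fact \emph{more} careful than the paper: the subcase $A\cup B=V$. The paper's own proof of the closely related Lemma~3.6 invokes Proposition~3.4 without checking that $A\cup B\neq V$, and then asserts that all four pieces $A\cap B,\ A\cup B,\ A\setminus B,\ B\setminus A$ are proper subsets of a $\lambda$-superatom --- which is plainly false for $A\cup B$. Your complement-counting argument (observe that $V\setminus A$ is a strict \emph{negative} arc fragment, so $|A|\le|V\setminus A|$, while $|V|=|A|+|B|-|A\cap B|=2|A|-|A\cap B|<2|A|$) patches this gap neatly. As you correctly flag, that step genuinely needs the paper's convention that a $\lambda$-superatom minimises cardinality over \emph{all} strict arc fragments, positive and negative alike; this is indeed how the paper defines the term.
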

\begin{lemma} Let $X=BD(G,T_0,T_1)$ be strongly connected but
not super$-\lambda$. If $X$ is neither a directed cycle nor a
symmetric cycle , then distinct positive(respectively, negative)
$\lambda-$superatoms of $X$ are vertex disjoint.
\end{lemma}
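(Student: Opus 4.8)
The plan is to deduce the statement from Theorem~3.5 whenever that theorem applies and to settle the low‑degree cases $\delta(X)\le 2$ by a direct analysis of small arc fragments. Write $\delta:=\delta(X)$, so $\delta=\min\{|T_0|,|T_1|\}$ by the degree identities after Definition~1.1, and $\lambda(X)=\delta$ by Corollary~2.9. If $\delta\ge 3$, then $\delta>2$, and since $X$ is strongly connected, is not a symmetric cycle, and is not super‑$\lambda$, Theorem~3.5 immediately gives the conclusion. If $|T_0|=|T_1|=1$, every vertex has in‑ and out‑degree $1$, so $X$, being strongly connected, is a directed cycle, excluded by hypothesis. Hence we may assume $\delta\in\{1,2\}$ with not both $|T_i|$ equal to $1$; and since the reverse digraph $X^{(r)}$ is again a strongly connected Bi‑Cayley digraph with the same vertex‑partition, interchanges positive and negative arc fragments, and preserves the properties ``strongly connected'', ``not super‑$\lambda$'' and ``not a (directed or symmetric) cycle'', it suffices to prove the statement for positive $\lambda$‑superatoms.

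Two elementary facts will be used. First, for any $A\subseteq V$ one has $|\omega^+(A)|=|A\cap X_0|\,|T_0|+|A\cap X_1|\,|T_1|-e_X(A)$, where $e_X(A)$ counts arcs with both ends in $A$, and $e_X(A)\le(|T_0|+|T_1|)\min\{|A\cap X_0|,|A\cap X_1|\}$; combining these with $|\omega^+(A)|=\lambda\le\min\{|T_0|,|T_1|\}$ for an arc fragment $A$ gives $\bigl||A\cap X_0|-|A\cap X_1|\bigr|\le 1$. Second, a strict positive arc fragment $A$ must satisfy $1\le|A\cap X_i|\le|G|-1$ for $i=0,1$: if, say, $A\cap X_0=X_0$ then the arcs from $X_0$ to $X_1\setminus(A\cap X_1)$ already number $|T_0|\cdot|X_1\setminus(A\cap X_1)|$, forcing $|A\cap X_1|\ge|G|-1$ and hence $|A|\ge|V|-1$. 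Consequently the $R(G)$‑orbit $\{Ag:g\in G\}$ of a positive $\lambda$‑superatom covers $V$, so if the superatoms are pairwise disjoint this orbit tiles $V$ — which together with the first fact forces $|A\cap X_0|=|A\cap X_1|$.

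Now assume the conclusion fails: there are distinct positive $\lambda$‑superatoms $A,B$ with $A\cap B\ne\varnothing$, and, as $|A|=|B|$, neither contains the other. If $A\cup B\ne V$, Proposition~3.4 makes $A\cap B$ and $A\setminus B$ positive arc fragments; since $1\le|A\cap B|<|A|$ and $A$ has least possible size among strict arc fragments, $A\cap B$ cannot be strict, so $|A\cap B|=1$; then $A\setminus B$ is a positive arc fragment of size $|A|-1<|A|$, which likewise cannot be strict, forcing $|A|=2$. A size‑$2$ positive arc fragment $A=(P\times\{0\})\cup(Q\times\{1\})$ has $P,Q\ne\varnothing$ (otherwise the arcs leaving $A$ number at least $2\min\{|T_0|,|T_1|\}>\lambda$ or $2\max\{|T_0|,|T_1|\}>\lambda$), so $P=\{g\}$, $Q=\{h\}$ and $|\omega^+(A)|=\bigl(|T_0|-[hg^{-1}\in T_0]\bigr)+\bigl(|T_1|-[hg^{-1}\in T_1]\bigr)=\delta$, which is possible only when $|T_0|=|T_1|=\delta\le 2$ and $hg^{-1}\in T_0\cap T_1$. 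Thus every size‑$2$ positive $\lambda$‑superatom is a \emph{fibre} $\{(g,0),(tg,1)\}$ with $t\in T_0\cap T_1$, and two distinct fibres can meet only if $|T_0\cap T_1|\ge 2$, hence only if $T_0=T_1$ (both of size $2$). In that case strong connectivity gives $G=\langle t^{-1}t'\rangle$ for $T_0=\{t,t'\}$, so $X\cong BD(\mathbb{Z}_{|G|},\{0,1\},\{0,1\})$, which is the symmetric cycle $C_{2|G|}$ — excluded. Therefore $A\cup B\ne V$ is impossible.

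Hence $A\cup B=V$; this is the crux. Then $|A\cap B|=2|A|-2|G|$ is even and positive, so $|A|>|G|$, and for every $g\in G$ with $Ag\ne A$ the sets $A$ and $Ag$ are distinct positive $\lambda$‑superatoms meeting (because $|A|>|G|$), whence $A\cup Ag=V$ by the previous paragraph; with $A=(P\times\{0\})\cup(Q\times\{1\})$ this says $P\cup Pg=G$ and $Q\cup Qg=G$. It follows that $S:=\{g\in G:Ag=A\}=\{g:Pg=P\}=\{g:Qg=Q\}$ is a subgroup, that $|P\cap Pg|=2|P|-|G|$ for all $g\notin S$, and hence (from $\sum_{g\in G}|P\cap Pg|=|P|^2$) that $|P|=|G|-|S|$, so $P$ is the complement in $G$ of a single left coset of $S$; likewise for $Q$, and $|A|=2(|G|-|S|)>|G|$ forces $[G:S]\ge 3$. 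A direct count then gives $|\omega^+(A)|=|S|\bigl(|T_0|+|T_1|-|T_0\cap K|-|T_1\cap K|\bigr)$ for a suitable translate $K$ of $S$, and since this equals $\lambda=\delta\le 2$ while the bracket is at least $1$ (a vanishing bracket would place $\langle T_1^{-1}T_0\rangle$ inside a conjugate of $S\ne G$), we get $|S|\le 2$ and only a short list of possibilities for $(|S|,|T_0|,|T_1|)$. In each of them one exhibits a strict positive arc fragment of size strictly less than $|A|$ — a fibre $\{(g,0),(tg,1)\}$, or, when $\{|T_0|,|T_1|\}=\{2,3\}$, an ``$S$‑coset pair'' $(g_0S)\times\{0\}\cup(g_1S)\times\{1\}$ of size $2|S|=4$ — contradicting the minimality of $A$ (and the subcase that reduces to $T_0=T_1$ of size $2$ again yields the symmetric cycle). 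This completes the proof, the negative case following by the reversal remark. The main obstacle is exactly this last step: establishing the coset rigidity of $P$ and $Q$ and then producing the smaller fragment in each surviving configuration.
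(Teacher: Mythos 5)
Your proof is correct, and for the generic situation it runs along the same lines as the paper's: for $\delta(X)\ge 3$ both you and the authors invoke Theorem~3.5, and when two meeting superatoms have union different from $V$ you both apply Proposition~3.4, force all the proper Boolean pieces to be singletons, reduce to size-$2$ fragments, and recognize the directed or symmetric cycle. The substantive difference is that the paper applies Proposition~3.4 without checking its hypothesis $A\cup B\neq V$ (and even asserts that $A\cup B$ is a proper subset of a superatom, which is impossible since $A\cup B\supseteq A$); because a Bi-Cayley digraph need not be balanced, one cannot pass to complements of arc fragments for free, so the case $A\cup B=V$ is a genuine gap that you identify and close. Your closing argument --- translating $A$ by $R(G)$, showing the stabilizer $S$ makes each of $P$ and $Q$ the complement of a single coset of $S$, computing $|\omega^+(A)|=|S|(|T_0|+|T_1|-|T_0\cap K|-|T_1\cap K|)$, and exhibiting a smaller strict fragment in each surviving case --- checks out (I verified the coset rigidity, the counting identity $\sum_g|P\cap Pg|=|P|^2$, and the final enumeration, including the $\{|T_0|,|T_1|\}=\{2,3\}$ subcase where the fibre fails and the coset pair is needed). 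One remark: your own formula already shows that $|\omega^+(V\setminus A)|=|\omega^+(A)|=\lambda$ once $P$ and $Q$ are coset complements, so $V\setminus A$ is itself a strict positive arc fragment of size $2|S|<2(|G|-|S|)=|A|$; this kills the whole $A\cup B=V$ branch in one stroke and makes the terminal case enumeration unnecessary. A minor imprecision: a size-$2$ positive arc fragment does not force $|T_0|=|T_1|$ (e.g.\ $|T_0|=1$, $T_0\subseteq T_1$, $|T_1|=2$ is possible), but your subsequent step correctly uses only that two \emph{meeting} fibres require $|T_0\cap T_1|\ge 2$, which does force $T_0=T_1$ of size $2$, so the conclusion stands.
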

\begin{proof} Suppose to the contrary that there are distinct
positive $\lambda-$superatoms $A$, $B$ of $X$ with $A\cap B \neq
\emptyset$. By proposition 3.4, each of $A\cap B,A\cup B,A\setminus
B,B\setminus A$ is a positive arc fragment which is a proper subset
of a $\lambda-$superatom. Therefore, each of these sets must have
cardinality 1 so that we may assume $A=\{u,v\}$, $B=\{v,w\}$ with
$u\neq w$. Thus we have
 $d_{X[A]}^+(u)=d_{X[A]}^-(v)\leq1$,
$d_{X[A]}^-(u)=d_{X[A]}^+(v)\leq1$,
$d_{X[B]}^+(v)=d_{X[B]}^-(w)\leq1$ and
$d_{X[B]}^-(v)=d_{X[B]}^+(w)\leq1$. \\\textbf{Case 1} $\delta(X)=1$.

$d_X^+(u)=d_X^+(v)=d_X^+(w)=1$, so $|T_0|=1$, $|T_1|=1$. And because
$X$ is a strongly connected digraph, we can get $X$ is a directed
cycle, a contradiction.\\\textbf{Case 2} $\delta(X)=2$.

$d_X^+(u)=d_X^+(v)=d_X^+(w)=2$. \\Because $X[A]$ and $X[B]$ are
weakly connected and $A$, $B$ and $A\cup B$ are arc fragments, we
can deduce

$|T_0|=|T_1|=2$ and $T_0=T_1$.\\ Because $X$ is strongly connected,
$X$ is a cycle, a contradiction.\\\textbf{Case 3} $\delta(X)\geq3$.

It is true by Theorem 3.5.
\end{proof}
For the rest of the paper we set $A_i = A \cap X_1 = H_i \times
{i}$, i = 0,1. Similarly to Lemma 2.4, we can derive the following
theorem.

\begin{lemma} Let $X=BD(G ,T_0,T_1)$, which is neither a directed cycle nor a symmetric cycle, be strongly connected but not
super$-\lambda$ . Let $A$ be a $\lambda-$superatom of $X$, then\\(1)
$V(X)$ is a disjoint union of distinct positive(negative)
$\lambda-$superatoms;\\(2) Let $Y=X[A]$, then  \emph{Aut($Y$)} acts
transitively both on $A_0$ and $A_1$; \\(3) If $A_i$ contains $(1,
i) (i=0, 1)$, then $H_i$ is a subgroup of $G$;\\(4) $|A_0|=|A_1|$.
\end{lemma}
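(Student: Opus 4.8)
The plan is to mirror the proof of Lemma 2.4 almost verbatim, replacing ``atom'' with ``$\lambda$-superatom'' throughout and citing the superatom analogues of the structural results rather than the atom ones. Parts (1) and (2) should follow immediately from Theorem 3.2 of the excerpt (in the vertex-transitive case, which applies because every Bi-Cayley digraph has $R(G)\leq\mathrm{Aut}(X)$ acting transitively on $X_0$ and on $X_1$, hence $\mathrm{Aut}(X)$ is ``block-transitive'' in the relevant sense) together with the fact, established in Lemma 3.6, that distinct positive (respectively negative) $\lambda$-superatoms are vertex disjoint. Concretely, I would first note that for any $\varphi\in R(G)\leq\mathrm{Aut}(X)$ the image $\varphi(A)$ is again a positive $\lambda$-superatom; combined with disjointness this yields a partition $V(X)=\bigcup_{i=1}^{k}\varphi_i(A)$, giving (1), and restricting the stabilizer to $A$ gives the transitive action on $A_0$ and $A_1$ in (2).

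For (3), the argument from Lemma 2.4(3) transfers directly: assume $(1_G,i)\in A_i$; for any $g\in H_0$ the translate $R(g)$ sends $A$ to another positive $\lambda$-superatom $Ag$, and since $g\cdot 1_G = g \in A_0\cap A_0g$ the two superatoms intersect, so by disjointness $A=Ag$. This forces $A_0g=A_0$ and $A_1g=A_1$; the first equality says $H_0g=H_0$ for all $g\in H_0$, i.e. $H_0$ is closed under the group operation and contains $1_G$, hence is a subgroup. The case $i=1$ is symmetric once one observes (as in the paragraph following Lemma 2.4) that $A_1$ is a left coset of $H_0$, so if it contains $1_G$ it equals $H_0$ and is itself a subgroup; alternatively one works with the negative $\lambda$-superatom structure by passing to the reverse digraph $X^{(r)}=BD(G,T_1,T_0)$. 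Part (4) is then obtained exactly as in Lemma 2.4(4): from the partition $X_i=\bigcup_{j=1}^k \varphi_j(A_i)$ and $|X_0|=|X_1|=|G|$ one reads off $k|A_0|=k|A_1|$, whence $|A_0|=|A_1|$.

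The main obstacle — really the only non-cosmetic point — is verifying that the hypotheses of Theorem 3.2 are genuinely met so that the disjointness input is legitimate: one must confirm that $X$ is not a symmetric cycle, is not super-$\lambda$, has $\delta(X)\geq 2$, and (since Bi-Cayley digraphs need not have $\delta(X)>2$) is vertex-transitive in the precise sense Theorem 3.2 requires. Here the hypothesis of the lemma already excludes directed cycles and symmetric cycles, and ``not super-$\lambda$'' is assumed; the case $\delta(X)=1$ would force $X$ to be a directed cycle (as in Case 1 of Lemma 3.6's proof) and is thus excluded, so $\delta(X)\geq 2$. Vertex-transitivity of $X$ is not literally available — a Bi-Cayley digraph need only be ``semi-transitive'' on the two parts — but Lemma 3.6 has already done exactly the needed work, handling $\delta(X)=2$ by hand and $\delta(X)\geq 3$ via Theorem 3.5; so in the write-up I would simply cite Lemma 3.6 for disjointness rather than re-invoking Theorem 3.2 directly. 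Once that citation is in place the proof is a routine transcription of Lemma 2.4, and I would present it in three or four short sentences per part, exactly paralleling that earlier proof.
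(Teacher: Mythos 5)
Your proposal is correct and takes essentially the same route as the paper, which gives no separate proof of this lemma but simply remarks that it is derived ``similarly to Lemma 2.4,'' with Lemma 3.6 supplying the disjointness of distinct positive (respectively, negative) $\lambda$-superatoms needed for the block/partition argument. Your additional care in justifying that disjointness input (ruling out $\delta(X)=1$ and noting that vertex-transitivity of $X$ itself is not literally available) only makes the argument more complete than the paper's.
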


Similarly as lemma 2.4, we also have $H_1H_0=H_1$ if $(1,0)\in A_0$
and $H_0H_1=H_0$ if $(1,1)\in A_1$. The following proposition is
easy to get.

By a similar argument as lemma 2.5, the following lemma is obtained.
\begin{lemma} Let $X=BD(G ,T_0,T_1)$, which is neither a directed cycle nor a symmetric cycle, be strongly connected but not
super$-\lambda$ . Let $A$ be a $\lambda-$superatom of $X$ and set
$A_0=\{g_1, g_2, g_3, ... , g_m\}\times\{0\}=H_0\times\{0\}$ and
$A_1=\{g_1^{'}, g_2^{'}, g_3^{'}, ... ,
g_m^{'}\}\times\{1\}=H_1\times\{1\}$. Then \\(1) If
 $t_ig_j\in H_1$ for some $t_i\in T_i$ (i=0, 1) and some some $j(1\leq
 j\leq m)$, then $t_ig_k\in H_1$ for any $k(1\leq k\leq m)$;
 \\(2) If
 $t_1^{-1}g_j^{'}\in H_0$ for some $t_1\in T_1$ and some $j(1\leq
 j\leq m)$, then $t_1^{-1}g_k^{'}\in H_0$ for any $k(1\leq k\leq
 m)$.
\end{lemma}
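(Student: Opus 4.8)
The plan is to follow the proof of Lemma 2.5 almost verbatim, with the structural facts about atoms replaced by their $\lambda$-superatom counterparts, namely Lemma 3.7 and the coset identity recorded in the remark immediately after it. First I would normalize by a translation. Since $R(G)\leqslant$ Aut$(X)$ by Proposition 1.2$(1)$, and since $|A_0|=|A_1|$ by Lemma 3.7$(4)$ together with $|A|\geq 2$ (a strict arc fragment) forces $A_0\neq\varnothing$, we may pick $g_j\in H_0$ and replace $A$ by the $\lambda$-superatom $R(g_j^{-1})(A)$, which is a $\lambda$-superatom of the same type and contains $(1_G,0)$. This substitution merely multiplies $H_0$ and $H_1$ on the right by the fixed element $g_j^{-1}$, and both assertions $(1)$ and $(2)$ are clearly equivalent for $A$ and for its right translate; so we may assume from the outset that $(1_G,0)\in A_0$. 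Then Lemma 3.7$(3)$ gives that $H_0$ is a subgroup of $G$, and the remark after Lemma 3.7 gives $H_1H_0=H_1$; combined with $|H_1|=|H_0|$ this shows $H_1$ is a single left coset of $H_0$.

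For part $(1)$, suppose $t_ig_j\in H_1$ with $t_i\in T_i$ and $g_j\in H_0$. Since $g_jH_0=H_0$, we get $t_iH_0=t_ig_jH_0\subseteq H_1H_0=H_1$, and comparing cardinalities yields $t_iH_0=H_1$; hence $t_ig_k\in t_iH_0=H_1$ for every $k$ with $1\leq k\leq m$. For part $(2)$, suppose $t_1^{-1}g_j^{'}\in H_0$ with $t_1\in T_1$ and $g_j^{'}\in H_1$. Then $g_j^{'}\in t_1H_0$, so the left cosets $t_1H_0$ and $H_1$ share the element $g_j^{'}$ and are therefore equal; hence $t_1^{-1}g_k^{'}\in H_0$ for every $k$ with $1\leq k\leq m$.

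I do not expect a genuine obstacle here: once Lemma 3.7 and the accompanying identity $H_1H_0=H_1$ are in hand, the argument is a pure coset computation identical in spirit to that of Lemma 2.5. The only points worth a sentence of care are the verification that a $\lambda$-superatom meets $X_0$ (so that the normalization $(1_G,0)\in A_0$ is legitimate), which follows from $|A_0|=|A_1|$ and $|A|\geq 2$, and the observation that translating $A$ by $R(g_j^{-1})$ produces another $\lambda$-superatom of the same type, so that Lemma 3.7 indeed applies to the translate.
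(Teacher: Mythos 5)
Your proposal is correct and matches the paper's intended argument: the paper proves this lemma simply by remarking that it follows ``by a similar argument as Lemma 2.5,'' and that argument is precisely your coset computation — normalize so that $(1_G,0)\in A_0$, invoke Lemma 3.7(3) and the identity $H_1H_0=H_1$ to see that $H_1$ is a single left coset of the subgroup $H_0$, and conclude. Your extra care about the legitimacy of the normalization via $R(g_j^{-1})$ and the cardinality comparison $|H_0|=|H_1|$ only makes explicit what the paper leaves implicit.
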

\begin{theorem} Let $X=BD(G,T_0,T_1)$ be strongly connected. If
$X$ is neither a directed cycle nor a symmetric cycle, then $X$ is
not super$-\lambda$ if and only if $X$ satisfies one of the
following conditions:\\(1) There exists a subgroup $H\leq G$ and
there distinct elements $t_0, t_0^{'}, t_0^{''}\in T_0$ such that

$|H|=\delta(X)$, $T_1^{-1}t_0\subseteq H$
,$t_0^{-1}(T_0\setminus\{t_0^{'}\})\subset H$ and
$t_0^{-1}t_0^{'}\notin H .$

or

$|H|=\delta(X)$/2, $T_1^{-1}t_0\subseteq H$,
$t_0^{-1}(T_0\setminus\{t_0^{'},t_0^{''}\})\subset H$ and
$t_0^{-1}t_0^{'},t_0^{-1}t_0^{''}\notin H$ .Where $t_0^{'}\neq t_0$
and $t_0^{''}\neq t_0$\\(2) There exists a subgroup $H\leq G$ and
two distinct elements $t_1,t_1^{'},\in T_1$ and some element $t_0\in
T_0$ such that

$|H|=\delta(X)$, $t_0^{-1}T_0\subset H$,
$(T_1\backslash\{{t_1^{'}}\})^{-1}t_0\subset H$ and
${t_1^{'}}^{-1}t_0\notin H.$

or

$|H|=\delta(X)$/2, $t_0^{-1}T_0\subset H$,
$(T_1\backslash\{t_1,t_1^{'}\})^{-1}t_0\subset H$ and
${t_1}^{-1}t_0, {t_1^{'}}^{-1}t_0 \notin H$.\\(3) There exists a
subgroup $H\leq G$ and two distinct elements $t_0, t_0^{'}\in T_0$
and some element $t_1\in T_1$ such that $|H|=\delta(X)/2$,
$t_0^{-1}(T_0\setminus\{t_0^{'}\})\subset H$, $t_0^{-1}t_0^{'}\notin
H$, $(T_1\backslash\{{t_1}\})^{-1}t_0\subset H$ and
$t_1^{-1}t_0\notin H$, where $t_0^{'}\neq t_0$.

\end{theorem}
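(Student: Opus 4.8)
The plan is to set up a two-way translation between the $\lambda$-superatoms of $X$ and the arithmetic of $T_0,T_1$ inside $G$, proving the two implications of the ``iff'' separately and leaning on the structure theory of this section together with $\lambda(X)=\delta(X)$ (Corollary~2.7).

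For the forward direction I would argue as follows. Assume $X$ is not super-$\lambda$; as $X$ is neither a directed nor a symmetric cycle, $\delta(X)\ge 2$ and a $\lambda$-superatom $A$ exists. After possibly replacing $X$ by its reverse digraph $X^{(r)}=BD(G,T_1,T_0)$ (positive superatoms of $X^{(r)}$ are exactly the negative superatoms of $X$, and reversal preserves ``super-$\lambda$''), and after a translation $R(g)$ to force $(1_G,0)\in A_0$, Lemma~3.7 gives $A_0=H_0\times\{0\}$ with $H_0\le G$, $A_1=H_1\times\{1\}$, and $H_1$ a left coset of $H_0$; put $m=|H_0|$. Using Lemma~3.8 one notes that for $t_0\in T_0$, $t_0H_0\subseteq H_1$ iff $t_0\in H_1$, and for $t_1\in T_1$, $t_1^{-1}H_1\subseteq H_0$ iff $t_1\in H_1$, so the boundary count comes out as $|\omega^+(A)|=m\bigl(|T_0\setminus H_1|+|T_1\setminus H_1|\bigr)$. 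Setting $\ell=|T_0\setminus H_1|$, $r=|T_1\setminus H_1|$ and using $|\omega^+(A)|=\lambda(X)=\delta(X)=\min\{|T_0|,|T_1|\}$, strong connectivity ($\ell+r\ge1$), and $|A|=2m\ge\delta(X)$ (Proposition~3.1), I get $m(\ell+r)=\delta(X)$ with $\delta(X)/2\le m\le\delta(X)$, hence $m\in\{\delta(X),\delta(X)/2\}$.

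The core is then a short case analysis. If $m=\delta(X)$ (so $\ell+r=1$), then combining $\ell,r\le m$ and $|T_i\cap H_1|\le m$ with $\delta(X)=\min\{|T_0|,|T_1|\}$ forces either $(\ell,r)=(1,0)$ with $T_1=H_1$ and exactly one element $t_0'$ of $T_0$ outside $H_1$, or $(\ell,r)=(0,1)$ with $T_0=H_1$ and exactly one element $t_1'$ of $T_1$ outside $H_1$; choosing $H=H_0$ and any $t_0\in T_0\cap H_1$ and reading off the escaping element, these are precisely conditions~(1) and~(2) in their $|H|=\delta(X)$ forms. If $m=\delta(X)/2$ (so $\ell+r=2$), the mixed inequalities pin down $m=1$, hence $\delta(X)=2$, $|T_0|=|T_1|=2$, and $T_0\cap H_1=T_1\cap H_1$ a single common element of $T_0$ and $T_1$ — which is exactly condition~(3) (and the $|H|=\delta(X)/2$ forms of~(1), (2)). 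Starting instead from a negative superatom, I would run the identical analysis on $X^{(r)}=BD(G,T_1,T_0)$ and use $t_1^{-1}t_0\in H\Leftrightarrow t_0^{-1}t_1\in H$ (legitimate because $T_0$, resp.\ $T_1$, sits in one left coset of $H$ here) to rewrite the output as conditions~(1)--(3) for $X$ with $T_0$ and $T_1$ interchanged, so that no further cases arise.

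For the converse each of (1)--(3) supplies an explicit candidate superatom. For condition~(1) I would take $A_0=H\times\{0\}$, $A_1=(t_0H)\times\{1\}$, $A=A_0\cup A_1$, and verify from the hypotheses that $|\omega^+(A)|=|H|=\delta(X)=\lambda(X)$, that $H\ne G$ (else $t_0^{-1}t_0'\in H$) so $2\le|A|\le|V(X)|-2$, and that $\omega^+(A)$ is the set of in-arcs or out-arcs of no vertex: when $|H|=\delta(X)$ its arcs emanate from, and terminate at, $|H|\ge 2$ distinct vertices, whereas a vertex's out-arc (in-arc) set is concentrated at one vertex; when $|H|=1$, $\omega^+(A)$ contains both an $X_0\to X_1$ and an $X_1\to X_0$ arc while every vertex's out-arc set, and every vertex's in-arc set, lies wholly in one direction. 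Hence $X$ is not super-$\lambda$, and the analogous constructions settle (2) and (3). The step I expect to fight with is the forward case analysis: trapping $m$ in $\{\delta(X),\delta(X)/2\}$ and then, in each branch, proving that the elements of $T_0$ and $T_1$ falling outside the coset $H_1$ are exactly those named in the conditions, all while organising the positive/negative and left/right symmetries so every sub-case of (1)--(3) is reached and none is produced spuriously.
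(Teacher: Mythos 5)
Your proposal is correct and follows the same skeleton as the paper's proof: normalize a $\lambda$-superatom $A$ so that $(1_G,0)\in A_0$, invoke Lemma 3.7 to write $A_0=H_0\times\{0\}$ with $H_0\leq G$ and $A_1=H_1\times\{1\}$ with $H_1$ a left coset of $H_0$, count $|\omega^+(A)|=|H_0|\left(|T_0\setminus H_1|+|T_1\setminus H_1|\right)$ via Lemma 3.8, combine this with $|A|\geq\delta(X)$ and $\lambda(X)=\delta(X)$ to force $\ell+r\leq 2$, and case-analyze; the sufficiency direction uses the same explicit set $A=H\times\{0\}\cup(t_0H)\times\{1\}$. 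Three of your refinements go beyond what the paper writes and are worth keeping. First, your inequalities in the $m=\delta(X)/2$ branch are sharper than the paper's five-case split: the sub-cases $(\ell,r)=(2,0)$ and $(0,2)$ (the paper's Cases 3 and 4) are actually impossible, since e.g. $T_1\subseteq H_1$ would give $\delta(X)\leq|T_1|\leq|H_1|=\delta(X)/2$; hence the $|H|=\delta(X)/2$ variants of conditions (1) and (2) are vacuous and the only surviving half-size case is $(1,1)$ with $|H|=1$ and $\delta(X)=2$, i.e.\ condition (3). (Your parenthetical attributing that outcome also to the half-size forms of (1) and (2) should therefore be deleted, but this does not affect the equivalence.) Second, in the sufficiency direction you explicitly check that $\omega^+(A)$ is not the set of in-arcs or out-arcs of any vertex; the paper stops at ``$A$ is a $\lambda$-superatom,'' which by itself does not rule out the cut coinciding with a vertex's arc set, so your extra verification closes a genuine gap. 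Third, your reduction of the negative-superatom case to $X^{(r)}=BD(G,T_1,T_0)$, together with the observation that the list of conditions is stable under interchanging $T_0$ and $T_1$, makes precise the paper's bare ``without loss of generality.''
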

\begin{proof}
Necessity. Without loss of generality assume $A$ is a positive
$\lambda-$super-atom of $X$ and $(1, 0)\in A$. From lemma 3.7, $H_0$
is a subgroup of $G$ and  $Y=X[A]$ is a bipartite digraph with
$d_Y^+((g_k,i))$=$d_Y^+((g_l,i))$ and
$d_Y^-((g_k,i))$=$d_Y^-((g_l,i))$ for any vertices $(g_k,i) , (g_l,
i)\in A$(i=0, 1). Furthermore, $d_Y^+((g_j,0))=d_Y^-((g_t,1))$ and
$d_Y^-((g_j,0))=d_Y^+((g_t,1))$. Denote
$d_Y^+((g_j,0))=d_Y^-((g_t,1))=p$,
$d_Y^-((g_j,0))=d_Y^+((g_t,1))=q$. Let $H=H_0$.\\\textbf{Claim:}
There exist at least an element $t_0\in T_0$ such that $H_1=t_0H_0$
if $(1, 0)\in A$.\\\textbf{The proof of the Claim: }If $p=0$, then
$\delta(X)=\lambda(X)=|\omega_X^{+}(A)|=|A_0|(|T_0|-p)+|A_1|(|T_1|-q)=
|A_0||T_0|+|A_1|(|T_1|-q)\geq |T_0|+ |A_1|(|T_1|-q)\geq|T_0|\geq
\delta(X)$. So $|A_0|=|A_1|=1$ and $|T_1|-q=0$. So
$\omega^+(A)=\omega^+(A_0)$, thus $X$ is super$-\lambda$. By a
similar argument we can prove $X$ is super$-\lambda$ when $q=0$. A
contradiction. So $pq\neq 0$.

If $(1,0)\in A_0$, then $H_1=H_1H_0$. And because $p\neq0$, there
exist at least an element $t_0\in T_0$ such that $t_0\in H_1$. Thus
$H_1=t_0H_0$. So the Claim is true.

Since
$\delta(X)=\lambda(X)=|\omega_X^{+}(A)|=|A_0|(|T_0|-p)+|A_1|(|T_1|-q)$,
$|A|\geq\delta(X)$ and $|A_0|=|A_1|$, we have
$|A_0|=|A_1|\geq\delta(X)/2$ and $|T_0|-p+|T_1|-q\leq2$. Now we
consider fine cases.
\\\textbf{Case 1} $|T_0|-p=1$ and $|T_1|-q=0$.
\\(i)
$\lambda(X)=|\omega_X^{+}(A)|=|A_0|=|H_0|=|H|=\delta(X)$, since
$|T_0|-p=1$ and $|T_1|-q=0$.\\(ii) Since $|T_0|-p=1$, there exists
an element $t_0^{'}\in T_0$ such that
$(T_0\setminus\{t_0^{'}\})H_0\subset H_1$ and $t_0^{'}H_0\cap
H_1=\emptyset$. It means $(T_0\setminus\{t_0^{'}\})H_0\subset
t_0H_0$ and $t_0^{'}H_0\cap t_0H_0=\emptyset$, so
$t_0^{-1}(T_0\setminus\{t_0^{'}\})\subset H_0$ and
$t_0^{-1}t_0^{'}\notin H_0$.
\\(iii) since $|T_1|-q=0$, we have that $T_1^{-1}H_1\subseteq H_0$.
It means $T_1^{-1}t_0H_0\subseteq H_0$, so  $T_1^{-1}t_0\subseteq
H_0$.
\\\textbf{Case 2} $|T_0|-p=0$ and $|T_1|-q=1$.\\(i)
$\lambda(X)=|\omega_X^{+}(A)|=|A_1|=|H_1|=|H_0|=|H|=\delta(X)$,
since $|T_0|-p=0$ and $|T_1|-q=1$.\\(ii) Since $|T_0|-p=0$, we have
that $T_0H_0\subseteq H_1$. It means $T_0H_0\subseteq t_0H_0$, so
$t_0^{-1}T_0\subseteq H_0$.
\\(iii) since $|T_1|-q=1$, there exists an element $t_1\in T_1$
such that $(T_1\setminus\{t_1\})^{-1}H_1\subset H_0$ and
$t_1^{-1}H_1\cap H_0=\emptyset$. It means
$(T_1\setminus\{t_1\})^{-1}t_0H_0\subset H_0$ and
$t_1^{-1}t_0H_0\cap H_0=\emptyset$, so
$(T_1\setminus\{t_1\})^{-1}t_0\subset H_0$ and $t_1^{-1}t_0\notin
H_0$.
\\\textbf{Case 3} $|T_0|-p=2$ and $|T_1|-q=0$.
\\It is similar to Case 1, we have\\(i)
$|H|=|H_0|=\delta(X)/2$.\\(ii)
$t_0^{-1}(T_0\setminus\{t_0^{'},t_0^{''}\})\subset H_0$ and
$t_0^{-1}t_0^{'},t_0^{-1}t_0^{''}\notin H_0$ for some
$t_0^{'},t_0^{''}\in T_0$.\\(iii) $T_1^{-1}t_0\subseteq H_0$.
\\\textbf{Case 4} $|T_0|-p=0$ and $|T_1|-q=2$.
\\It is similar to Case 2, we have\\(i) $|H|=|H_1|=\delta(X)/2$.\\(ii)
$t_0^{-1}T_0\subset H_0$.\\(iii)
$(T_1\backslash\{{t_1^{'}},{t_1^{''}}\})^{-1}t_0\subset H_0$ and
 ${t_1^{'}}^{-1}t_0,{t_1^{''}}^{-1}t_0\notin H_0$ for some $t_1^{'},t_1^{''}\in T_1$.
\\\textbf{Case 5} $||T_0|-p=1$ and $|T_1|-q=1$.
\\(i) $\lambda(X)=|\omega_X^{+}(A)|=|A_0|=|H_0|=|H|=\delta(X)/2$,
since $|T_0|-p=1$, and $|T_1|-q=1$.\\(ii) since $|T_0|-p=1$, then
$t_0^{-1}(T_0\setminus\{t_0^{'}\})\subset H_0$ and
$t_0^{-1}t_0^{'}\notin H_0$ for some element $t_0^{'}\in T_0$ and
$t_0^{'}\neq t_0$.
\\(iii) since $|T_1|-q=1$, $(T_1^{-1}\backslash\{{t_1}^{-1}\})t_0\subset
H_0$ and $t_1^{-1}t_0\notin H_0$ for some $t_1\in
T_1$.\\Sufficiency. Set $A=H\times\{0\}\cup(t_0H)\times\{1\}$. Thus
$(1,0)\in A$, $H_0=H$ and $H_1=t_0H_0$.\\(1) If
$t_0^{-1}(T_0\setminus\{t_0^{'}\})\subset H$ and
$t_0^{-1}t_0^{'}\notin H$, then
$t_0^{-1}(T_0\setminus\{t_0^{'}\})H=H$ and $t_0^{'}\notin t_0H$, it
is $(T_0\setminus\{t_0^{'}\})H=t_0H=H_1$ and $t_0^{'}H_0\cap
H_1=\emptyset$. So $|T_0|-p=1$. And if $T_1^{-1}t_0\subseteq H$,
then $T_1^{-1}t_0H\subseteq H$. It is $T_1^{-1}H_1\subseteq H$. So
$|T_1|-q=0$. Associate with the condition $|H|=\delta(X)$, we have
$\lambda(X)=|\omega_X^{+}(A)|=|A_0|=|H|=\delta(X)$. So $A$ is a
$\lambda-$superatom of $X$.\\Similarly, If $|H|=\delta(X)$/2,
$T_1^{-1}t_0\subseteq H$,
$t_0^{-1}(T_0\setminus\{t_0^{'},t_0^{''}\})\subset H$ and
$t_0^{-1}t_0^{'},t_0^{-1}t_0^{''}\notin H$, we can prove $A$ is a
$\lambda-$superatom of $X$.\\(2) If
$(T_1\backslash\{{t_1^{'}}\})^{-1}t_0\subset H$ and
${t_1^{'}}^{-1}t_0\notin H$, then
$(T_1\backslash\{{t_1^{'}}\})^{-1}t_0H\subset H$ and $t_0\notin
t_1H$, it is $(T_1\backslash\{{t_1^{'}}\})^{-1}H_1\subset H$ and
$t_0H\cap t_1H=\emptyset$. So $|T_1|-q=1.$ And if
$t_0^{-1}T_0\subset H$, then $t_0^{-1}T_0H\subset H$, it is
$T_0H\subset t_0H$. So $|T_0|-p=0$. Associate with the condition
$|H|=\delta(X)$, we get
$\lambda(X)=|\omega_X^{+}(A)|=|A_1|=|H_1|=|H_0|=\delta(X)$. So $A$
is a $\lambda-$superatom of $X$. \\Similarly, If $|H|=\delta(X)$/2,
$t_0^{-1}T_0\subset H$,
$(T_1\backslash\{t_1,t_1^{'}\})^{-1}t_0\subset H$ and $t_1^{-1}t_0,
{t_1^{'}}^{-1}t_0 \notin H$, we can prove $A$ is a
$\lambda-$superatom of $X$.\\(3) If
$t_0^{-1}(T_0\setminus\{t_0^{'}\})\subset H$,
$(T_1\backslash\{{t_1}\})^{-1}t_0\subset H$, and $t_0^{-1}t_0^{'},
{t_1}^{-1}t_0\notin H$ then
$t_0^{-1}(T_0\setminus\{t_0^{'}\})H\subset H$,
$(T_1\backslash\{{t_1}\})^{-1}t_0H\subset H$, $t_0^{'}\notin t_0H$
and $t_0\notin t_1H$, it is $(T_0\setminus\{t_0^{'}\})H\subset
t_0H=H_1$, $(T_1\backslash\{{t_1}\})^{-1}H_1\subset H_0$,
$t_0^{'}H\cap t_0H=\emptyset$ and $t_0H\cap t_1H=\emptyset$. So
$|T_0|-p=1$ and $|T_1|-q=1$. Thus associate with the condition
$|H|=\delta(X)/2$, we have
$\lambda(X)=|\omega_X^{+}(A)|=|A_0|+|A_1|=|H_0|+|H_1|=\delta(X)$. So
$A$ is a $\lambda-$superatom of $X$.
\end{proof}

\newpage

\end{document}